\documentclass[a4paper,11pt]{amsart}
\usepackage{fullpage}
\usepackage{amsmath}
\usepackage{amsthm}
\usepackage{amssymb}
\usepackage{physics}
\usepackage{graphicx}
\usepackage[toc,page]{appendix}
\usepackage{floatrow}
\usepackage{mathtools}
\usepackage{tikz-cd} 
\usepackage{todonotes}
\usepackage[all]{xy}
\usepackage{bbm}
\usepackage{soul}
\usepackage[T1]{fontenc}
\usepackage{romannum}
\usepackage{todonotes}
\usepackage{framed}

\usepackage{hyperref}
\hypersetup{
    colorlinks=true,
    linkcolor=blue,
    filecolor=magenta,      
    urlcolor=cyan,
    pdftitle={Overleaf Example},
    pdfpagemode=FullScreen,
    }

\newtheorem{theorem}{Theorem}[section]

\newtheorem{definition}[theorem]{Definition}

\newtheorem{cor}[theorem]{Corollary}
\newtheorem{prop}[theorem]{Proposition}
\newtheorem{lemma}[theorem]{Lemma}
\newtheorem{example}[theorem]{Example}

\newtheorem{remark}[theorem]{Remark}
\newtheorem{conj}[theorem]{Conjecture}

\newtheorem*{rem}{Remark}

\newtheorem*{theoremA}{Theorem A}
\newtheorem*{theoremC}{Theorem C}
\newtheorem*{corB}{Corollary B}

\newcommand{\C}{\mathbb{C}}

\newcommand{\Cmc}{\C [z_1,\ldots,z_d]} 

\newcommand{\HS}{\mathcal{H}} 

\newcommand{\Rd}{\mathcal{R}_{d}}

\title{The Geometric Arveson--Douglas Conjecture through Veronese Embeddings}

\author{Francesca Arici}
\address{Mathematical Institute, Leiden University, P.O. Box 9512, 2300 RA
Leiden, the Netherlands}
\email{f.arici@math.leidenuniv.nl}
\thanks{Partially funded by the Netherlands Organisation of Scientific Research (NWO) under the grant VI.Vidi.233.231 }

\author{Yufan Ge}
\address{Mathematical Institute, Leiden University, P.O. Box 9512, 2300 RA
Leiden, the Netherlands}
\subjclass[2020]{Primary 47A13; Secondary 14N05, 19K35, 47B35, 46L80}
\keywords{Arveson--Douglas conjecture, Drury--Arveson space, equivariant $KK$-theory, projective variety, subproduct system, Toeplitz extension, Veronese embedding}
\curraddr{}
\email{y.ge@math.leidenuniv.nl, yufan.ge.math@qq.com}
\thanks{}

\author{Dimitris M. Gerontogiannis}
\address{IM PAN,  Jana i Jędrzeja Śniadeckich 8, 00-656 Warszawa, Poland}
\curraddr{Mathematical Institute, Leiden University, P.O. Box 9512, 2300 RA
Leiden, the Netherlands}
\email{dgerontogiannis@impan.pl}
\thanks{}

\begin{document}
\pagenumbering{arabic}
    
\begin{abstract}
We prove invariance of the geometric Arveson--Douglas conjecture under Veronese embeddings of projective varieties. This provides new examples of singular varieties satisfying the conjecture. Furthermore, it reduces the conjecture to projective varieties whose defining ideals admit quadratic Gr{\"o}bner bases, by keeping the same Schatten regularity thresholds. Our results lead to canonical examples of subproduct-systems whose Toeplitz algebras are not $KK$-equivalent to the complex numbers. Finally, we show that Douglas' fundamental class conjecture, asserting that the associated Toeplitz extension implements $\mathbb T$-equivariant duality in $KK$-theory, is invariant under Veronese embeddings as well. While the latter result bootstraps the fundamental class conjecture, it also leads to infinitely many counterexamples. 
\end{abstract}

\maketitle

\section{Introduction}
The Arveson--Douglas conjecture \cite{WA05, RD06} is one of the central open problems at the
interface of multivariable operator theory, algebraic geometry, and
noncommutative geometry. It goes back to Arveson's work \cite{WA97,WA05} on essential normality of commuting row-contractions on Hilbert spaces of analytic functions, and to Douglas' observation that these concepts potentially extend index theory to singular projective varieties \cite{RD06}. There are several related formulations of the conjecture, e.g. on Hardy space, (weighted) Bergman space, and Drury--Arveson space \cite{Sh14,GW20}. Here, we focus on the latter, which is particularly challenging; see, for instance, \cite{WX21}.
 
Let $H_d^2$ be the Drury--Arveson space in $d$ commuting variables, which is a left module over the ring $\Rd:=\mathbb C[z_1,\ldots, z_d]$ of complex polynomials in $d$-variables, with action by bounded multiplication operators. Also, for a homogeneous ideal $I\triangleleft  \Rd$ and $1\leq i\leq d$, let $R_i$ denote the compression of the (left) multiplication operator by $z_i$ on $H_d^2$ to the quotient $Q_{I}:=H_d^2/ \overline{I}$, where $\overline{I}$ is the closure of $I$. Further,  let $\mathcal{S}_p(H)$ be the $p$-Schatten ideal ($p\geq 1$) on a separable Hilbert space $H$, with $\mathcal S_{\infty}(H)$ denoting the compact operators $\mathbb K(H)$.
 
Following Arveson \cite{WA05}, we say that the quotient module $\mathcal{Q}_I$ of the Drury--Arveson space $H^2_d$ is $p$-essentially normal, if the commutators $ [R_i,R_j^*]$ are in the Schatten class $S_p(Q_I)$
for all $i,j =1, \dots, d$.

The Arveson--Douglas conjecture concerns the essential normality of the quotient module of a homogeneous ideal (see Conjecture \ref{conj:ADmain}). In geometric terms, the Arveson--Douglas conjecture can be reformulated using projective varieties and their affine cone.
 
\begin{conj}[Geometric Arveson--Douglas (GAD) conjecture]\label{conj:GADintro}
Let $X\subset \mathbb P^{d-1}$ be a projective variety. Denote by $\widehat{X}\subset \mathbb C^d$ its affine cone. Then, the quotient module $Q_{X}:=Q_{I(X)}$ is $p$-essentially normal for every $p>\dim_{\mathbb C}\widehat{X}$. 
\end{conj}
 
The conjecture has been confirmed for monomial ideals by Arveson \cite{WA05}; for principal ideals, affine cones of complex dimension at most one, and ambient dimension $d\leq 3$ by Guo--Wang \cite{GW08}; for finite unions of linear subspaces by Kennedy--Shalit \cite{KS12}; and for affine cones smooth away from the origin by Engli\v{s}--Eschmeier \cite{EE15} (see also Douglas--Tang--Yu \cite{DTY16}). Parallel Bergman-space results form a large part of the literature as well; see \cite{Har22, GW20} and the references therein.
 
In this work, we show that the GAD conjecture is invariant under Veronese embeddings. Namely, for $k\geq 1$, let $N_k={d+k-1\choose k}$ and $\nu_k:\mathbb P^{d-1}\to \mathbb P^{N_k-1}$ be the normalised Veronese embedding, see Equation \eqref{eq:normVmap}. Our first main result is the following.
 
\begin{theoremA}[Theorem \ref{thm:VpowerAD}]
Let $X\subset \mathbb P^{d-1}$ be a projective variety and $k\geq 1$ be arbitrary. Then,
$$X\text{ satisfies GAD conjecture}\Leftrightarrow \nu_k(X)\text{ satisfies GAD conjecture}.$$
\end{theoremA}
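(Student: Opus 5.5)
The plan is to realise $Q_{\nu_k(X)}$ as a graded piece of $Q_X$ and then compare the two sets of commutators. I take as given the standard facts $\dim_{\mathbb C}\widehat{\nu_k(X)}=\dim_{\mathbb C}\widehat X$ and that $I(\nu_k(X))$ is homogeneous. With these, both sides of Theorem A concern the same range $p>\dim_{\mathbb C}\widehat X$, and it suffices to prove that $Q_X$ is $p$-essentially normal if and only if $Q_{\nu_k(X)}$ is.

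\textbf{Step 1: identification.} For the normalised Veronese map \eqref{eq:normVmap} one has $\langle \nu_k(z),\nu_k(w)\rangle=\langle z,w\rangle^k$. Hence pulling back along $\nu_k$ carries the kernel $(1-\langle \zeta,\eta\rangle)^{-1}$ of $H^2_{N_k}$ to $(1-\langle z,w\rangle^k)^{-1}$. This gives a unitary from $H^2_{N_k}$ onto the closed span $\bigoplus_n (H^2_d)_{nk}$ of the homogeneous components of $H^2_d$ whose degree is divisible by $k$. Under this unitary, multiplication by the coordinate $\zeta_\alpha$ becomes multiplication by $\sqrt{c_\alpha}\,z^\alpha$, where $|\alpha|=k$ and $c_\alpha$ is the multinomial coefficient. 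A homogeneous polynomial vanishes on $\nu_k(X)$ exactly when its pullback vanishes on $X$, so the unitary carries $\overline{I(\nu_k(X))}$ onto $\bigoplus_n \overline{I(X)}_{nk}$. Consequently $Q_{\nu_k(X)}\cong V_0:=\bigoplus_n (Q_X)_{nk}$, and the compressed coordinate operators correspond to $\sqrt{c_\alpha}\,R^\alpha|_{V_0}$, where $R^\alpha$ is the corresponding monomial in $R_1,\dots,R_d$ on $Q_X$. Here one checks that $V_0$ reduces every $R^\alpha$ with $|\alpha|=k$, because $Q_X$ is graded and co-invariant.

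\textbf{Step 2: ($\Rightarrow$).} Assume $[R_i,R_j^*]\in\mathcal S_p(Q_X)$ for all $i,j$. The Leibniz rule expands $[R^\alpha,R^{\beta *}]$ into a finite sum of terms, each a product of bounded operators with one factor $[R_i,R_j^*]$. Hence $[R^\alpha,R^{\beta*}]\in\mathcal S_p$. Compressing to the reducing subspace $V_0$ preserves $\mathcal S_p$, which by Step 1 gives $p$-essential normality of $Q_{\nu_k(X)}$.

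\textbf{Step 3: ($\Leftarrow$), the main obstacle.} Now only the degree-$k$ monomials on the divisible part $V_0$ are controlled. I split $Q_X=\bigoplus_{r=0}^{k-1}V_r$ by degree modulo $k$. There are two tasks.

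First, I will show that $[R^\alpha,R^{\beta*}]|_{V_r}\in\mathcal S_p$ for every $r$. To do this, I embed $V_r$ into a finite direct sum of copies of $V_0$ via $T_r f=(\sqrt{c_\gamma}\,R^\gamma f)_{|\gamma|=k-r}$. This map intertwines the degree-$k$ monomials. On each graded piece, $T_r^*T_r=\sum_\gamma c_\gamma R^{\gamma*}R^\gamma$ is a function of the degree on $H^2_d$, up to a compact perturbation on $Q_X$. Its value tends to $1$, so $T_r$ is bounded below modulo finite rank, with $T_r^*T_r-1$ diagonal of order $1/n$, which lies in $\mathcal S_p$ for $p>\dim\widehat X$. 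This compact-perturbation and bounded-below estimate is where I expect the real work.

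Second, I must pass from degree-$k$ monomials back to linear ones. My first attempt will use the identity
\[
R_i^*R_j=\sum_{|\gamma|=k-1} c'_\gamma\, R^{\gamma *}R_i^*R_jR^\gamma\cdot(\text{degree normaliser})+\text{error in }\mathcal S_p,
\]
which comes from the Drury--Arveson relation $\sum_l M_l M_l^*=1-E_0$ iterated $k-1$ times. This expresses $[R_i,R_j^*]$ through commutators of $R^{\gamma+e_i}$ with $R^{\gamma+e_j\,*}$ plus Schatten-class remainders, with the degree normalisers controlled as in the first task.
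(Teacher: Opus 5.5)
Step 2 is the paper's argument and Step 1 is a variant of it. Step 3, however, carries all the content of ($\Leftarrow$), and it has a genuine gap: as you set it up, it is circular.

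By co-invariance of $Q_X$, $\sum_{|\gamma|=m}c_\gamma R^{\gamma*}R^{\gamma}|_{(Q_X)_n}=a_{n,m}-D_{m,n}$. Here $a_{n,m}=\prod_{i=0}^{m-1}\frac{n+i+d}{n+i+1}$ is the scalar coming from $H^2_d$, and $D_{m,n}=\sum_{|\gamma|=m}c_\gamma P_{Q_X}M^{\gamma*}P_{\overline{I}}M^{\gamma}|_{(Q_X)_n}\geq 0$. Only $a_{n,m}-1$ is $O(1/n)$; the defect $D_m$ is not. Already $D_1=\sum_j B_j^*B_j$ with $B_j=P_{\overline I}M_{z_j}|_{Q_X}$, and $[R_i^*,R_j]=P_{Q_X}[M_{z_i}^*,M_{z_j}]P_{Q_X}-B_i^*B_j$, where the first term lies in $\mathcal S_p$ by Lemma~\ref{lem:VpowerAD}. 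So the estimate you flag as ``the real work'', that $T_r^*T_r$ is a degree function up to an $\mathcal S_p$ perturbation, is (a form of) the $p$-essential normality you are trying to prove.

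Even granting it, $T_r$ only identifies $(R^\alpha|_{V_r})_{\alpha}$ with the restriction of $\bigoplus_\gamma R^\alpha|_{V_0}$ to the invariant subspace $\mathcal M=\mathrm{ran}\,T_r$. Restrictions to invariant subspaces do not inherit essential normality. You would also need $P_{\mathcal M}(\bigoplus_\gamma R^\alpha)P_{\mathcal M^\perp}\in\mathcal S_{2p}$. For $g\in\mathcal M^\perp=\ker T_r^*$ one gets $T_r^*(\bigoplus_\gamma R^\alpha)g=\sum_\gamma\sqrt{c_\gamma}\,[R^{\gamma*},R^\alpha]g_\gamma$ with $|\gamma|=k-r$. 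These are mixed commutators that the hypothesis on $V_0$ does not control.

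Your second task has the same flaw. Moving $R^\gamma$ past $R_i^*$ produces terms $R^{\gamma*}[R_i^*,R^\gamma]R_j$, i.e.\ exactly the degree-one commutators under estimate, so they are not known Schatten remainders.

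The missing idea is to abandon commutators and argue with the positive defects, as the paper does. By Lemma~\ref{lem:VpowerAD} and the Leibniz rule, $P_{Q_X}[\widetilde M_\alpha^*,\widetilde M_\beta]P_{Q_X}\in\mathcal S_p$ for free. So, by \eqref{eq:VpowerAD_9}, $p$-essential normality of the degree-$m$ tuple on $Q_X^{(m)}$ is equivalent to $D_m\in\mathcal S_p(Q_X^{(m)})$. Two relations, $\widetilde M_\beta\widetilde M_{e_i}=\sqrt{(\beta_i+1)/m}\,\widetilde M_{\beta+e_i}$ for $\beta\in\Lambda_{m-1}$ and $\sum_i\widetilde M_{e_i}^*\widetilde M_{e_i}=a_n$ on degree $n$, give the exact identity \eqref{eq:VpowerAD_15}. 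It expresses $D_{m,n}$ as a sum of positive terms $T_{\beta,n}^*D_{1,n+j}T_{\beta,n}$, $0\le j\le m-1$, with coefficients $\geq 1$. Combined with the exact row identity $\sum_{\beta\in\Lambda_j}T_{\beta,n}T_{\beta,n}^*=P_{(Q_X)_{n+j}}$, this yields $\|D_{1,kl+j}\|_{\mathcal S_p}\le\#\Lambda_j\,\|D_{k,kl}\|_{\mathcal S_p}$. That single inequality handles every residue class mod $k$ and the descent from degree $k$ to degree $1$ at once, with no error terms.

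Separately, correct Step 1. For $d,k\geq 2$, pulling back along $\widehat{\nu_k}$ is not unitary from $H^2_{N_k}$ onto $\bigoplus_n(H^2_d)_{nk}$; for $d=k=2$ the graded dimensions are $\binom{n+2}{2}$ and $2n+1$. It is a coisometry with kernel $\overline{I(\nu_k(\mathbb P^{d-1}))}\subseteq\overline{I(\nu_k(X))}$. That still gives $Q_{\nu_k(X)}\cong Q_X^{(k)}$ intertwining the actions; the paper obtains this directly from the kernels on $\widehat X\cap\mathbb B_d$.
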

 
It is well known that the Veronese power of a ring preserves the underlying projective scheme. Theorem A is indeed inspired by that fact, but the challenge here is to lift this at the level of Drury--Arveson modules. Specifically, the key tool in proving Theorem A identifies $Q_{\nu_k(X)}$ with the divisible-by-$k$ homogeneous summands of $Q_X$, which is an $\mathcal R_{N_k}$-submodule of $Q_X$, via an $\mathcal R_{N_k}$-module unitary. The gist is to build this unitary. To this end, we use the fact that $Q_I$ is a reproducing kernel Hilbert space when $I$ is radical. 
 
Theorem A also yields examples of modules that do not fit into the list of known cases of quotient modules for which the conjecture holds. More specifically, the Veronese embeddings define a bootstrapping operation which produces examples (e.g. see Example \ref{ex:1}) beyond the familiar low-dimensional, hypersurface, smooth, or linear situations. Moreover, older results of Arveson \cite{WA07} and Shalit \cite{OS11} reduced the general Arveson--Douglas conjecture to quadratic homogeneous polynomials, 
however, at the cost of increasing the Schatten regularity threshold and drastically changing the underlying geometry. In that light, Theorem A offers (although for radical ideals) a direct geometric reduction of the geometric conjecture to radical homogeneous ideals admitting a quadratic Gr{\"o}bner basis (see the discussion below for subproduct systems). Note that not every quadratic homogeneous ideal admits such a basis.
 
Theorem A, paired with Kennedy--Shalit's work \cite{KS15}, also offers a Veronese bootstrapping operation in the hyperrigidity of the canonical generating sets of the Toeplitz algebras associated with quotient modules of Drury--Arveson spaces. For more details we refer to Open Problem 2. 
 
We then move on to investigate the Veronese bootstrapping operation at the level of index theory, in particular in relation with the Toeplitz extensions associated with quotient modules. Specifically, if the GAD conjecture holds
for $X$ (even for $p=\infty$), then the $C^*$-algebra $\mathcal T(Q_{X})$ generated by the
compressed coordinate multipliers, gives the Toeplitz extension $\tau_X$,
\begin{equation}\label{eq:intro_toe}
0\longrightarrow \mathbb K(Q_{X})\longrightarrow \mathcal T(Q_{X}) \longrightarrow C(\widehat{X}\cap \partial \mathbb B_d)\longrightarrow 0.    
\end{equation}
We shall refer to the boundary space $L(Q_X):=\widehat{X}\cap \partial \mathbb B_d$ as the \textit{shadow space of $X$}.
 
The extension $\tau_X$ fits into the broader framework of subproduct systems, which provide a common framework for several Fock-type constructions. A subproduct system is a graded family $H=(H_n)_{n\geq 0}$, with $H_0=\mathbb C$, such that $H_{n+m}\subseteq H_n\otimes H_m$ associatively. They were introduced by Shalit--Solel \cite{SS09} and Bhat--Mukherjee \cite{BM10} in connection with CP-semigroups. In that generality, Viselter \cite{Vis12} showed that one always has a Toeplitz extension. In particular, for the Drury--Arveson quotients which are naturally subproduct systems, the GAD conjecture says that the boundary algebra is the commutative $C^*$-algebra $C(L(Q_X))$; the finer Schatten estimates provide connections to noncommutative geometry \'a la Connes.
 
It is worth noting that inspired by Manin's programme for noncommutative geometry \cite{Man88,Man91}, the first two authors of the current paper initiated the study of quadratic subproduct systems in \cite{AG26}. Around the same time, the notion of Veronese powers of subproduct systems was introduced by the second-named author in \cite{YG25}, as a Hilbert space analogue of Veronese subalgebras of graded algebras. A key feature of the Veronese power operation is that sufficiently high Veronese powers of a graded algebra admit quadratic Gr\"obner bases \cite{ERT94}, making it an effective tool for turning an arbitrary ideal into a quadratic one. 
Specifically, by \cite[Proposition~2.2, Remark 2]{PoPo}, for a commutative one-generated algebra $A$, the Veronese subalgebra $A^{(k)}$ is quadratic provided the algebra $A$ has no defining relations of degree $> 2k$. Another interesting direction is the study of $q$-deformed analogues of  the Drury--Arveson space arising from subproduct systems, see \cite{BGN25}. 

A first result related to the extension $\tau_X$ concerns Viselter's proposed series of open problems \cite{Vis12} connected to the theory of subproduct systems. One of them was the existence of a subproduct system whose Toeplitz algebra is not $KK$-equivalent to $\mathbb C$. Although subproduct systems were not yet introduced in the early 90's, this question also goes back to Lesch \cite{Les90}. Here, we provide the first such canonical example. Specifically, for $X=\nu_k(\mathbb P^1)$, we observe that the associated shadow space is the lens space $L(k,1)$. Also, the GAD conjecture holds due to smoothness, and therefore the Toeplitz extension exists. Then, from Pimsner's six-term exact sequence in $K$-theory, we obtain the following.

\begin{corB}[Corollary \ref{cor:B}]\label{cor:cor B}
We have that $\mathcal{T}(Q_{\nu_k(\mathbb P^1)})$ is not $KK$-equivalent to $\mathbb C$, if $k\geq 2$.   
\end{corB}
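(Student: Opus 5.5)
The approach is to compute $K$-theory. If $\mathcal{T}(Q_{\nu_k(\mathbb P^1)})$ were $KK$-equivalent to $\mathbb C$, then $K_0\cong\mathbb Z$ and $K_1=0$. So it suffices to show that one of these groups differs, using the Toeplitz extension \eqref{eq:intro_toe} for $X=\nu_k(\mathbb P^1)$.

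Since $\mathbb P^1$ is smooth, $\nu_k(\mathbb P^1)\subset \mathbb P^{k}$ is a smooth projective curve. Its affine cone is smooth away from the origin, so by Engli\v{s}--Eschmeier (or by Guo--Wang, as the cone has dimension two and GAD holds for $\mathbb P^1$, combined with Theorem A) the quotient module is essentially normal and the extension exists.

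Next I identify the shadow space. The normalised Veronese map sends $(z_1,z_2)\in\partial\mathbb B_2$ to the vector of normalised degree-$k$ monomials $\sqrt{\binom{k}{j}}\,z_1^jz_2^{k-j}$. By the multinomial theorem this vector has norm $(|z_1|^2+|z_2|^2)^{k/2}=1$. Hence the map $S^3\to \widehat{X}\cap\partial\mathbb B_{k+1}$ is surjective. It identifies two points exactly when they differ by a $k$-th root of unity acting diagonally. The shadow space is therefore $S^3/\mathbb Z_k=L(k,1)$, with $K^0(L(k,1))\cong\mathbb Z\oplus\mathbb Z/k$ and $K^1\cong\mathbb Z$.

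Now apply the six-term sequence of $0\to\mathbb K\to\mathcal T\to C(L(k,1))\to0$:
\[
\mathbb Z\to K_0(\mathcal T)\to \mathbb Z\oplus\mathbb Z/k \xrightarrow{\partial_0} 0,\qquad 0\to K_1(\mathcal T)\to \mathbb Z\xrightarrow{\partial_1}\mathbb Z\to K_0(\mathcal T).
\]
The even boundary map vanishes because $K_1(\mathbb K)=0$, so $K_0(\mathcal T)\to K^0(L(k,1))$ is surjective. If $K_0(\mathcal T)\cong\mathbb Z$, then $\mathbb Z$ would surject onto $\mathbb Z\oplus\mathbb Z/k$. For $k\ge2$ this is impossible, since $\mathbb Z\oplus\mathbb Z/k$ is not cyclic. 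This already gives the contradiction, with no need to compute the index map $\partial_1$; that map is Pimsner's, given by the Fredholm index pairing with the Toeplitz extension.

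The main obstacle is the shadow-space identification: the Veronese image of the unit sphere must be exactly $\widehat{X}\cap\partial\mathbb B$. This relies on the normalisation in \eqref{eq:normVmap} making $\nu_k$ isometric on norms, which is what makes the lens-space quotient apply. The $K$-theory of lens spaces, $\tilde K^0(L(k,1))\cong\mathbb Z/k$, I take as standard.
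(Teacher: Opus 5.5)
Your proposal is correct and follows the paper's proof essentially verbatim. You identify the shadow space with $L(k,1)=S^3/\mu_k$, which the paper obtains from Proposition~\ref{prop:shadow_space} applied to $I=(0)\triangleleft\mathcal R_2$. You then use the six-term sequence of the Toeplitz extension: since $K_1(\mathbb K)=0$, the map $K_0(\mathcal T(Q_{\nu_k(\mathbb P^1)}))\to K^0(L(k,1))\cong\mathbb Z\oplus\mathbb Z/k$ is surjective, and $\mathbb Z$ cannot surject onto this group for $k\geq 2$. You also supply the details the paper leaves implicit, namely why the boundary map vanishes and why a non-cyclic target yields the contradiction.
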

 
Following Douglas \cite{RD06}, $\tau_X$ defines an odd $K$-homology class $[\tau_X]\in KK_1(C(L(Q_X)),\mathbb C)$. In fact, as $\tau_X$ is $\mathbb T$-equivariant (this is straightforward, e.g. see \cite{DTY16}), if $m$ is the multiplication on $C(L(Q_X))$ and $\Delta_X:=m^*[\tau_X]$, one obtains the group homomorphism via Kasparov (slant) product,
\begin{equation}\label{eq:intro_duality}
-\otimes_{C(L(Q_X))} \Delta_X:KK^{\mathbb T}_*(\mathbb C, C(L(Q_X)))\to KK^{\mathbb T}_{*+1}(C(L(Q_X)),\mathbb C).    
\end{equation}
 
Douglas conjectured that $[\tau_X]$ is an equivariant fundamental class in $K$-homology, a statement we make precise below as the Fundamental Class (FC) conjecture. In terms of $KK$-theory, the conjecture can be reformulated as follows.
 
\begin{conj}[Fundamental class (FC) conjecture]
Let $X\subset \mathbb P^{d-1}$ be a projective variety. Assume that the Toeplitz extension $\tau_X$ defined in \eqref{eq:intro_toe} exists. Then, the map defined in $\eqref{eq:intro_duality}$ is an isomorphism. 
\end{conj}
 
This is an equivariant Poincar\'e duality statement in $KK$-theory, see Section \ref{sec:AD-conj_present}. According to the subsequent work of Douglas--Tang--Yu \cite{DTY16}, the FC conjecture holds in geometric situations where the shadow space $L(Q_X)$ is smooth, strongly pseudoconvex, and $[\tau_X]$ is represented by the Dirac operator. Also, due to the work in \cite{DTY16}, the FC conjecture suggests a canonical approach for extending the Grothendieck--Riemann--Roch theorem to singular varieties, see Remark \ref{rem: Proj_duality}.
 
However, as communicated to us by Xiang Tang (second named author in \cite{DTY16}), there is a singular counterexample to the FC conjecture, see Remark \ref{rem:counter_ex}. Nevertheless, there might be a way to fix this through the notion of Spanier--Whitehead duality in $KK$-theory, known as $KK$-duality. We discuss this in Open Problem 3. 
 
Despite the counterexample, the FC conjecture (or close alternatives as in Open Problem 3) still holds high mathematical value. In particular, the subproduct-system viewpoint also suggests that Toeplitz extensions should be regarded as a source of fundamental classes, even beyond the commutative setting, as happens, e.g. for Cuntz--Krieger algebras \cite{KP97} and Cuntz--Pimsner algebras \cite{AGN24, RRS19}.
 
Our second main result is that the FC conjecture is invariant under Veronese embeddings as well, even if the extensions $\tau_k$ are mutually different.

\begin{theoremC}[Theorem \ref{thm:Vpower_duality}]
Let $X\subset \mathbb P^{d-1}$ be a projective variety for which $Q_X$ is $\infty$-essentially normal, and $k\geq 1$ be arbitrary. Then, $$X\text{ satisfies FC conjecture}\Leftrightarrow \nu_k(X)\text{ satisfies FC conjecture}.$$
\end{theoremC}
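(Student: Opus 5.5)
Theorem C asserts that the duality map \eqref{eq:intro_duality} is an isomorphism for $X$ exactly when it is one for $\nu_k(X)$. The plan is to relate the two Toeplitz extensions $\tau_X$ and $\tau_{\nu_k(X)}$ through the $\mathbb T$-action, and then transport the duality map along the resulting identifications in equivariant $KK$-theory.

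First, the shadow spaces. The normalised Veronese map $\nu_k$ induces a map $\widehat X\cap\partial\mathbb B_d\to \widehat{\nu_k(X)}\cap\partial\mathbb B_{N_k}$. This map is the quotient by the subgroup $\mathbb Z_k\subset\mathbb T$, since $\nu_k(\lambda z)=\lambda^k\nu_k(z)$. Hence
\[
L(Q_{\nu_k(X)})\cong L(Q_X)/\mathbb Z_k,
\]
and $C(L(Q_{\nu_k(X)}))\cong C(L(Q_X))^{\mathbb Z_k}$. Here the $\mathbb T$-action on the quotient is the $k$-th root of the residual action of $\mathbb T/\mathbb Z_k\cong\mathbb T$.

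Second, the extensions. The key tool behind Theorem A identifies $Q_{\nu_k(X)}$, $\mathcal R_{N_k}$-unitarily, with the degree-divisible-by-$k$ summand of $Q_X$. The full module then splits as
\[
Q_X=\bigoplus_{r=0}^{k-1}Q_X^{(r)},
\]
where $Q_X^{(r)}$ consists of the homogeneous pieces of degree $\equiv r \bmod k$. This is the spectral decomposition for the $\mathbb Z_k$-action. Compressing $\tau_X$ to the fixed-point summand recovers $\tau_{\nu_k(X)}$, and the other summands give extensions of $C(L(Q_X))^{\mathbb Z_k}$ twisted by the $\mathbb Z_k$-characters. At the level of equivariant $KK$-theory, this should give a canonical identification
\[
KK^{\mathbb T}_*(C(L(Q_X)),\mathbb C)\cong KK^{\mathbb T}_*(C(L(Q_X))^{\mathbb Z_k},\mathbb C),
\]
together with the analogous one for $KK^{\mathbb T}_*(\mathbb C,\cdot)$. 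I would obtain these via the Green--Julg/imprimitivity-type isomorphism
\[
KK^{\mathbb T}(A,B)\cong KK^{\mathbb T/\mathbb Z_k}(A\rtimes\mathbb Z_k,B),
\]
combined with the fact that $\mathbb Z_k$ acts freely, so that $C(L)\rtimes\mathbb Z_k$ is Morita equivalent to $C(L/\mathbb Z_k)$. Finally, $\mathbb T/\mathbb Z_k\cong\mathbb T$ via the $k$-th power map.

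Third, the comparison. I would show that under these identifications $\Delta_X$ corresponds to $\Delta_{\nu_k(X)}$, and that the Kasparov products correspond as well. The Kasparov product is natural with respect to the Morita and induction isomorphisms. The class $[\tau_X]$, which lives in $\mathbb T$-equivariant $K$-homology, restricts to $\mathbb Z_k$-isotypic components. Its image under the induction isomorphism is the class of the degree-$0$ summand, namely $[\tau_{\nu_k(X)}]$, because the isomorphism picks out the $\mathbb Z_k$-invariant part of the Hilbert space. The two duality maps then fit into a commuting square whose vertical arrows are isomorphisms, so one is bijective iff the other is. This gives both directions at once.

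The main obstacle is this last naturality check. One must verify that the Green--Julg/Morita isomorphism sends $m^*[\tau_X]$ exactly to $m^*[\tau_{\nu_k(X)}]$, including the treatment of the diagonal multiplication map $m$. The subtle point is that the $\mathbb Z_k$ quotient does not commute naively with $A\otimes A$. I would first test the argument on $X=\mathbb P^{d-1}$, where $Q_X=H^2_d$ and the classes are known. If the direct naturality fails, the fallback is to work with $\mathbb T$-equivariant $K$-theory groups directly. These are modules over $R(\mathbb T)=\mathbb Z[t^{\pm1}]$, and passing to $\mathbb T/\mathbb Z_k$ corresponds to restricting scalars along $t\mapsto t^k$; I would compare the maps degree by degree, using that $\tau_X$ is graded by degree.
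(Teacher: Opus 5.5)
Your architecture matches the paper's proof. You identify $A_k$ with $A^{\mu_k}$ via the free $\mu_k$-action, and build vertical isomorphisms $\Phi_{G,N}$ on $K$-homology and $\Psi_{G,N}$ on $K$-theory, with $G=\mathbb T$, $N=\mu_k$ and $G/N\cong\mathbb T$ via $gN\mapsto g^k$. You then show $\Phi_{G,N}[\tau]=[\tau_k]$ by cutting down to the invariant part $\bigoplus_l(Q_X)_{kl}$ and conjugating by $U$, and conclude from a commuting square.

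Two points in the set-up need repair.
\begin{itemize}
\item Since $\mu_k$ acts on $A\rtimes\mu_k$ by inner automorphisms, $KK^{\mathbb T/\mu_k}(A\rtimes\mu_k,B)$ is not meaningful as written. One needs Chabert--Echterhoff twisted $KK^{(\mathbb T,\mu_k)}$ \cite{CE01}. The twist becomes trivial only after compressing by $p_N=k^{-1}\sum_n v_n$ to $A^{\mu_k}$.
\item This dual version needs $\mu_k$ to act trivially on the second variable. So for $KK^{\mathbb T}(\mathbb C,A)$ you must instead use Green--Julg, $KK^{\mathbb T}_*(\mathbb C,A)\cong K_*(A\rtimes\mathbb T)$, together with the imprimitivity bimodule between $A\rtimes\mathbb T$ and $A^{\mu_k}\rtimes(\mathbb T/\mu_k)$.
\end{itemize}
Also, $\tau$ is an extension, not a representation of $A$ on $Q_X$. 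So ``picking out the invariant part of the Hilbert space'' has to be done on an equivariant Stinespring dilation $J\colon Q_X\to H$, using $P_NJ(Q_X)=J(\bigoplus_l(Q_X)_{kl})$.

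The genuine gap is the commutativity of the square. You leave it open as the ``main obstacle'', yet it is the real content of the theorem.
\begin{itemize}
\item Transporting $\Delta=m^*[\tau]$ itself to $\Delta_k$ has no natural home. For $k\geq 2$ the diagonal $\mu_k$-invariants of $A\otimes A$ are strictly larger than $A_k\otimes A_k$.
\item Your $R(\mathbb T)$-module fallback gives no mechanism for comparing the two Kasparov products.
\end{itemize}

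The missing idea is to work only with products over $A$.
\begin{itemize}
\item Because $A$ is commutative, take $x=[M]$ with $M$ a $\mathbb T$-equivariant finitely generated projective $A$-module. Then $(x\otimes 1_A)\otimes_{A\otimes A}[m]\in KK_0^{\mathbb T}(A,A)$ is the class of the bimodule ${}_AM_A$ with zero operator.
\item Hence $x\otimes_A\Delta=[{}_AM_A\otimes_A H,T]$, where $[H,F]$ represents $[\tau]$ and $T$ is an $F$-connection.
\item Use the explicit formulas $\Psi_{G,N}[M]=[M^{N}]$ and $\Phi_{G,N}[E,F]=[E^{N},P_NFP_N]$.
\end{itemize}
Commutativity then reduces to two facts. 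The first is the unitary $({}_AM_A\otimes_A H)^{N}\cong({}_AM_A)^{N}\otimes_{A^{N}}H^{N}$, which holds because the $N$-action on $A$ is saturated (cf.\ \cite{BE15}). The second is that $P_NTP_N$, transported through this unitary, is an $F^{N}$-connection, where $F^N:=P_NFP_N|_{H^N}$. Together these give $\Phi_{G,N}(x\otimes_A\Delta)=\Psi_{G,N}(x)\otimes_{A_k}\Delta_k$, which is exactly the commuting square you need.
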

 
Note that Theorem~\ref{thm:Vpower_duality}, combined with the counterexample in Remark \ref{rem:counter_ex}, produces infinitely many counterexamples to the FC conjecture.
 
\subsection*{Acknowledgements}
DMG would like to thank the organisers and participants of the workshop 
`Around the Arveson--Douglas conjecture' hosted in Potsdam in July 2026 
for several stimulating exchanges. In particular, he would like to thank 
Adam Dor-On for discussions related to Viselter's questions about the 
$K$-theory of Toeplitz algebras; Matthew Kennedy for discussions on 
hyperrigidity; Sven Raum and Xiang Tang for discussions related to duality. 
We are especially grateful to Xiang Tang for providing us with the 
counterexample in Remark \ref{rem:counter_ex}. FA and YG would like to 
thank Orr Shalit, Eli Shamovich and Yi Wang for insightful exchanges. 
The authors are also thankful to Magnus Goffeng and Bram Mesland for 
helpful comments and discussions.

\subsection*{Declaration on AI and AI-Assisted Technologies}

The authors acknowledge the use of Claude Sonnet 4.6 for checking spelling, punctuation, and for generating a draft of the present declaration. 
 
All mathematical content, proofs, results, and scientific conclusions are entirely the work of the authors, who bear full responsibility for the integrity and accuracy of the work. Our work was shaped by results of human-to-human interactions with the colleagues mentioned in the acknowledgement section. 

\section{Preliminaries}

\subsection{The Drury--Arveson space}
The Drury--Arveson space $H_d^2$ is a generalisation of the Hardy space to $d$ variables. For details, we refer to \cite{Har22}. Before we begin, we note that we will follow the usual convention for the inner products related to Drury--Arveson space, namely that they are linear in the first variable and conjugate-linear in the second. 

Formally, $H_d^2$ is defined as the Hilbert space completion of the ring of polynomials $\mathcal{R}_d=\Cmc$ with respect to the inner product 
\[\langle z^\alpha, z^\beta\rangle_{H_d^2}:=\delta_{\alpha,\beta} \frac{\alpha!}{|\alpha|!}.\] Here $\alpha,\beta\in \mathbb N^d$. Also, $\alpha!:=\alpha_1!\ldots \alpha_d!$ and $ |\alpha|=\alpha_1+\ldots \alpha_d$. Further, by $e_i\in \mathbb N^d$ we will denote the $d$-tuple $e_i(j):=\delta_{i,j}.$

Alternatively, the Drury--Arveson space admits a description as the space of holomorphic functions on the unit ball $f:\mathbb{B}^ d \to \mathbb{C}$ with  power series expansion $f(z)=\sum_{\alpha\in \mathbb N^d} c_{\alpha} z^{\alpha}$ satisfies \[\|f\|^2=\sum_{\alpha \in \mathbb N^d} |c_{\alpha}|^2 \frac{\alpha!}{|\alpha|!}<\infty.\]

An important feature of $H_d^2$ is that it is a reproducing-kernel Hilbert space (RKHS) on $\mathbb B_d$, with kernel $$K_{H_d^2}(z,w):=\frac{1}{1-\langle z,w\rangle_{\mathbb C^d}}.$$ Therefore, the linear span of $K_{H_d^2,w}:=K_{H_d^2}(\cdot,w)$ is dense in $H_d^2$. Moreover, \[\langle K_{H_d^2,z},K_{H_d^2,w}\rangle_{H_{d^2}}=K_{H_d^2}(w,z),\qquad \forall w, z \in \mathbb{B}_d\] and \[ \langle f, K_{H_d^2,w}\rangle_{H_d^2}=f(w), \quad \forall f\in H_d^2.\]


\subsection{Hilbert modules \'a la Arveson}
In his study of multivariable operator theory, Arveson \cite{WA07} introduced a notion of Hilbert space equipped with a polynomial ring action, under the name of \emph{Hilbert modules}. In what follows, we will refer to these objects as $\Rd$-modules, to avoid confusion with Hilbert $C^*$-modules. Recall that the symbol $\Rd$ stands for the ring $\Cmc$ of complex polynomials in $d$ variables.


\begin{definition}\label{defn:A-Hilbert}
	A Hilbert module over $\Rd$, also called a $\Rd$-module, is a Hilbert space $\HS$, equipped with a $\mathbb{C}$-algebra action given by a unital $\mathbb{C}$-algebra homomorphism \[ \sigma: \Rd \to \mathbb{B}(\HS). \]
	We will use the notation $(\HS, \sigma)$, or simply $\HS$ when the module action is clear. 
\end{definition}

Recall that a $d$-tuple of operators $(T_1, \dots, T_d)$ on a Hilbert space is called commuting if $[T_i,T_j]=0$ for all $1\leq i,j\leq d$.

Given a $\Rd$-module $(\HS, \sigma)$, it is straightforward to verify that the operators $(\sigma(z_i))_{i=1}^{d}$ form a commuting $d$-tuple acting on $\HS$. Conversely, given a commuting $d$-tuple $ (T_1, \dots, T_d)$ on $\HS$, the assignment $z_i \mapsto T_i$ extends uniquely to a unital $\mathbb{C}$-algebra homomorphism $\Rd \to \mathbb{B}(\HS)$.

Moreover, a $\mathcal{R}_d$-module $(\HS,\sigma)$ is called contractive if \[ \| \sum_{i = 1}^d \sigma(z_i)(\xi_i) \|^2 \leq \sum_{i = 1}^d \|\xi_i\|^2,\quad \forall \xi_i \in \HS. \] 


Amongst the contractive $\mathcal{R}_d$-modules, the Drury--Arveson space $H_d^2$ with the $\mathcal{R}_d$-action by multiplication operators, features prominently as a universal object (see e.g. \cite[Theorem 3.21]{Har22}). Specifically, consider the multiplication operators $M_{z_1},\ldots, M_{z_d}$ on $H_d^2$. It is straightforward to show that these are bounded, and a polynomial $p\in \mathcal{R}_d$ acts on $H_d^2$ as the operator $$M_p:=p(M_{z_1},\ldots, M_{z_d}).$$ 

Also, the reproducing-kernel feature asserts that the kernel functions are eigenfunctions for the adjoints of multiplication operators. Specifically, for any $p\in \mathcal{R}_d$ and $w\in \mathbb B_d$ we have that 
\begin{equation}\label{eq:adjoints}
M_p^*K_{H_d^2,w}=\overline{p(w)}K_{H_d^2,w}.
\end{equation}
To sum up, there is a canonical ring homomorphism $\sigma: \mathcal{R}_d\to \mathbb{B}(H_d^2).$


In the sequel, we also consider the following notion.

\begin{definition}
Let $(\HS,\sigma)$ be a $\mathcal{R}_d$-module, and let $\mathcal{P}$ be a closed subspace of $\HS$. We say that $\mathcal{P}$ is a \emph{submodule} of $\HS$ if \[ \sigma(q)(\mathcal{P}) \subset \mathcal{P}, \quad \forall q \in \mathcal{R}_d, \]
where the $\mathcal{R}_d$-action on $\mathcal{P}$ is inherited from the $\mathcal{R}_d$-action on $\HS$.

Given a submodule $\mathcal{P}$, we say $\mathcal{Q}$ is a \emph{quotient module} if, as a Hilbert space, $\mathcal{Q} = \HS / \mathcal{P}$, and the $\mathcal{R}_d$-action is given by \[ \sigma_\mathcal{Q}(q) = P_{\mathcal{Q}}  \sigma(q)|_\mathcal{Q},\quad \forall q \in \mathcal{R}_d,  \]
where $P_{\mathcal{Q}}$ is the orthogonal projection from $\HS$ onto $\mathcal{Q}$, and $\sigma(q)|_{\mathcal{Q}}$ is the restriction of the $\mathcal{R}_d$-action on $\HS$ to $\mathcal{Q}$.
\end{definition}

Of course, submodules and quotient modules of contractive modules, are again contractive. For an overview of the main differences and similarities between submodules and quotient modules, we refer the reader to \cite{GW08}. See, for instance, \cite[Proposition~1.04]{Wang18} for a characterisation of essential normality of quotient modules. 

\subsection{Geometric Arveson--Douglas conjecture}\label{sec:AD-conj_present}
 In his seminal paper \cite[Proposition~5.3]{WA97}, Arveson observed that the multiplication operators $M_{z_i}$ on the Drury--Arveson space $H^2_d$ give rise to a $p$-essentially normal $d$-tuple of operators, that is,
\[ [M_{z_i},M_{z_j}^*]\in \mathcal{S}_p(H^2_d),\]
for all $1\leq i,j\leq d$ and $p>d$. 

This led to the extension of the result to submodules and quotient modules of the Drury-Arveson space in subsequent work of Arveson in \cite{WA05} and of Douglas in \cite{RD06}. 

We will henceforth focus on homogeneous ideals  $I \triangleleft \mathcal{R}_d$ which are non-degenerate, meaning that $I\neq \mathcal R_d$ and $\sqrt{I}\neq (z_1,\ldots, z_d).$
Given a homogeneous ideal $I\triangleleft \mathcal{R}_d$ consider its closure $\overline{I}$ in $H_d^2$. There are two additional $\mathcal{R}_d$-modules at play, namely the submodule $\overline{I}$, and the quotient $Q_I:=H_d^2/\overline{I}$. The $\mathcal{R}_d$-action on the first is simply by restriction. 

For the second, the $\mathcal{R}_d$-action is given by compressing the multiplication operators $M_{z_j}$ to $$R_j:=P_{Q_I}M_{z_j}|_{Q_I}.$$ The operators $R_j$ will be used throughout the paper.

Given a non-degenerate ideal, we can assign a non-empty projective variety $V_{\mathbb{P}}(I):=\mathrm{Proj} (\mathcal{R}_{d}/I)$ and an affine variety $V_{\mathbb A}(I):=\mathrm{Spec}(\mathcal{R}_{d}/I)\neq \{0\}$. The latter is called the \textit{affine cone} of the former and $$V_{\mathbb P}(I)=(V_{\mathbb A}(I)\setminus \{0\})/\mathbb C^{\times}.$$ Note that affine cones in $\mathbb C^d$ are in one-to-one correspondence with radical homogeneous ideals in $\mathcal{R}_d$ through the Nullstellensatz. In the sequel, we will denote the affine cone of a projective variety $X\subset \mathbb P^{d-1}$ by $\widehat{X}\subset \mathbb{C}^{d}$.

\begin{conj}[Arveson--Douglas conjecture]\label{conj:ADmain}
Let $I\triangleleft \mathcal{R}_d$ be a homogeneous ideal. Then, the $\mathcal{R}_d$-module $Q_I$ is $p$-essentially normal, meaning that for every $1\leq i,j\leq d$ the commutators $$[R_i,R_j^*]\in \mathcal{S}_p(Q_I),$$ whenever $p>\dim_{\mathbb C} V_{\mathbb A}(I)$. 
\end{conj}

This conjecture is important because it opens the window to study the geometry of the affine variety $V_{\mathbb A}(I)$ through index theory. 

\begin{definition}
Let $I \triangleleft  \Rd$ be a homogeneous ideal and $Q_I$ the corresponding quotient module. We define the \emph{shadow space} of $V_{\mathbb P}(I)$ as
\[
L(Q_I) := V_{\mathbb{A}}(I) \cap \partial \mathbb{B}_d,
\]
\end{definition}
We will typically write $L(Q_I)$ to stress the relationship to the ideal 
$I$, and $L(Q_X)$ when we wish to stress the relationship to the underlying projective variety 
$X = V_{\mathbb P}(I)$.

Equipped with the relative topology from the unit sphere $\partial \mathbb{B}_d$, the shadow space $L(Q_{X})$ is a compact Hausdorff space. 
\begin{remark}
The set $L(Q_I)$ is a compact real semialgebraic set. 
When $X = V_{\mathbb P}(f) \subset \mathbb{P}^{d-1}$ is a hypersurface, it coincides 
with the \emph{link} of the isolated singularity of $\widehat{X}$ at the origin 
in the sense of Milnor \cite{Mil68}. In higher codimension, some of those are examples of links of complete intersections, see the monograph \cite{Loo84}.
\end{remark}

From \cite[Theorem~9.2.1]{BCR98}, $L(Q_I)$ has the homotopy type of a finite CW complex. Also, $V_{\mathbb A}(I)$ can be reconstructed from $L(Q_I)$ as $I$ is homogeneous. Specifically, we have the homeomorphism 
\begin{equation}\label{eq:salg_map}
 V_{\mathbb A}(I)\setminus \{0\}\cong L(Q_I)\times (0,\infty),\qquad z\mapsto \left(\frac{z}{\|z\|_{\mathbb C^d}}, \|z\|_{\mathbb{C}^d} \right).   
\end{equation} 
Consequently, since $2\dim_{\mathbb C} V_{\mathbb A}(I)$ equals the topological dimension $\dim V_{\mathbb A}(I)$ of $V_{\mathbb A}(I)\subset \mathbb C^d$ (see \cite[Proposition~2.8.2]{BCR98}), we get $$\dim L(Q_I)=2\dim_{\mathbb C} V_{\mathbb A}(I)-1.$$ Furthermore, following the definition \cite[Definition 2.1]{HHS23} of smooth/singular points for semi-algebraic sets and the map \eqref{eq:salg_map}, we obtain $$\mathrm{Sing}(L(Q_I))=\mathrm{Sing}(V_{\mathbb A}(I))\cap \partial \mathbb B_d.$$ Therefore, in general, $L(Q_I)$ can be quite singular. 

\begin{prop}[\cite{RD06}]\label{prop:Toeplitz_ext}
Assume the Arveson--Douglas conjecture holds for a homogeneous ideal $I\triangleleft \mathcal{R}_d$. Also, let $\mathcal{T}(Q_I):=C^*(R_j: 1\leq j\leq d)$. Then, $\mathcal{T}(Q_I)$ contains the ideal $\mathbb{K}(Q_I)$ of compact operators on $Q_I$ and the space $L(Q_I)$ fits into the short exact sequence \begin{equation}
\label{eq:toep}
\xymatrix{0 \ar[r] & \mathbb{K}(Q_I) \ar[r] & \mathcal{T}(Q_I) \ar[r]^{\sigma} & C(L(Q_I)) \ar[r] & 0}.\end{equation}
The symbol map $\sigma$ is given by $\sigma(R_j):=z_j|_{L(Q_I)},$ where $z_j$ is the $j$-coordinate function on $\mathbb C^d$. We will call this short exact sequence the \textit{Toeplitz extension of} $C(L(Q_I))$.
\end{prop}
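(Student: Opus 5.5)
Here is the plan. I would show three things in turn: that $\mathcal T(Q_I)$ is irreducible, that it contains a nonzero compact operator, and that the quotient by the compacts is commutative with spectrum $L(Q_I)$.

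\textbf{Step 1: a nonzero compact.} The constant function $1$ spans the degree-zero summand of $Q_I$, which is nonzero because $I\neq\mathcal R_d$. Let $P_0$ be the projection onto it. On $H_d^2$ we have $\sum_j M_{z_j}M_{z_j}^*=1-P_0$. Since $\overline I$ is invariant under each $M_{z_j}$, we have $M_{z_j}^*Q_I\subset Q_I$, so $R_j^*=M_{z_j}^*|_{Q_I}$. It follows that
\[
\sum_j R_jR_j^* = P_{Q_I}\Big(\sum_j M_{z_j}M_{z_j}^*\Big)\Big|_{Q_I}=1-P_0 .
\]
Hence $P_0=1-\sum_j R_jR_j^*$ lies in $\mathcal T(Q_I)$, and it is a rank-one projection.

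\textbf{Step 2: irreducibility.} Polynomials are dense in $Q_I$, so the vectors $p(R)P_0 1$, which equal $P_{Q_I}p$, span a dense subspace. Consequently any operator commuting with $\mathcal T(Q_I)$ commutes with $P_0$, is a scalar on the range of $P_0$, and is therefore a scalar. An irreducible $C^*$-algebra that contains a nonzero compact contains all of $\mathbb K(Q_I)$.

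\textbf{Step 3: the quotient.} By the conjecture, $[R_i,R_j^*]$ is compact, and the $R_j$ commute, so the quotient $\mathcal T(Q_I)/\mathbb K(Q_I)$ is a commutative $C^*$-algebra generated by the images $r_j$ of the $R_j$. Its spectrum is the joint spectrum $\Sigma\subset\mathbb C^d$ of $(r_1,\dots,r_d)$. I must show that $\Sigma=L(Q_I)$.

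\emph{Inclusion $\Sigma\subset\partial\mathbb B_d$.} Since $\sum_j r_jr_j^*=1$ in the quotient, every character $\chi$ satisfies $\sum_j|\chi(r_j)|^2=1$.

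\emph{Inclusion $\Sigma\subset V_{\mathbb A}(I)$.} For $f\in I$, I need $f(R)=0$. This holds because $f(R)=P_{Q_I}M_f|_{Q_I}$, since $\overline I$ is co-invariant, and $M_f$ maps into $\overline I$. Thus every character vanishes on $I$.

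\emph{Reverse inclusion.} For $w\in V_{\mathbb A}(I)\cap\mathbb B_d$, the kernel $K_w$ is orthogonal to $I$, so $K_w\in Q_I$ and $R_j^*K_w=\overline{w_j}K_w$. As $|w|\to1$, the normalised kernels $k_w$ tend weakly to $0$. This gives approximate eigenvectors for the $r_j^*$ at the boundary point $\zeta=\lim w$, so every point of $L(Q_I)$ lies in the essential joint spectrum, i.e.\ in $\Sigma$. This requires that every point of $V_{\mathbb A}(I)\cap\partial\mathbb B_d$ be a limit of points $w$ with $|w|<1$, which holds by homogeneity (take $w=t\zeta$, $t\uparrow1$).

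\textbf{Step 4: the symbol map.} With $\Sigma=L(Q_I)$, the symbol map is $\sigma(R_j)=z_j|_{L}$, and it is surjective by Stone--Weierstrass.

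I expect the main obstacle to be the approximate-eigenvector argument for the reverse inclusion. The delicate points are verifying that $\|(r_j^*-\overline{\zeta_j})[k_w]\|\to0$ modulo compacts, and that the weak convergence $k_w\to0$ lets these vectors detect essential spectrum. The first follows from $R_j^*k_{t\zeta}=t\overline{\zeta_j}k_{t\zeta}$. The second follows since $\|K_{t\zeta}\|^2=(1-t^2)^{-1}\to\infty$ while $\langle K_{t\zeta},p\rangle=p(t\zeta)$ stays bounded for every polynomial $p$.
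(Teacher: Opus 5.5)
Your proof is correct. The paper gives no argument of its own here; the proposition is quoted from Douglas, so there is no paper route to compare against. Your argument is the standard self-contained one:
\begin{itemize}
\item the defect $1-\sum_j R_jR_j^*$ is the rank-one projection onto the constants;
\item the vacuum is cyclic, so $\mathcal T(Q_I)$ is irreducible and hence contains $\mathbb K(Q_I)$;
\item essential normality makes the quotient by the compacts commutative;
\item characters kill $I$ because $f(R)=P_{Q_I}M_f|_{Q_I}=0$ for $f\in I$;
\item normalised kernels along the rays $t\zeta$ give the reverse inclusion.
\end{itemize}

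For the step you flagged, here is a clean formulation. Suppose $\zeta\notin\Sigma$. Then $\sum_j(r_j-\zeta_j)(r_j-\zeta_j)^*$ is invertible in $\mathcal T(Q_I)/\mathbb K(Q_I)$. Hence $A=\sum_j(R_j-\zeta_j)(R_j-\zeta_j)^*$ satisfies $BA=1+K$ for some bounded $B$ and compact $K$. You have $\|Ak_{t\zeta}\|\leq\sum_j\|R_j-\zeta_j\|\,(1-t)|\zeta_j|\to 0$. You also have $\|Kk_{t\zeta}\|\to 0$, because $k_{t\zeta}$ is weakly null. Together these give $\|k_{t\zeta}\|\to 0$, contradicting $\|k_{t\zeta}\|=1$.

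Two minor points. First, in Step 3 the reason $f(R)=P_{Q_I}M_f|_{Q_I}$ is that $\overline I$ is \emph{invariant} (equivalently, $Q_I$ is co-invariant), not that $\overline I$ is co-invariant. You stated this correctly in Step 1.

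Second, Step 1 uses $1\in\mathcal T(Q_I)$. That is the usual convention, but if $C^*(R_j:1\leq j\leq d)$ is read as non-unital you can still recover it:
\begin{itemize}
\item Any operator commuting with the non-unital algebra commutes with $\sum_jR_jR_j^*=1-P_0$, hence with $P_0$. So your irreducibility argument goes through unchanged.
\item The element $\sum_j[R_j^*,R_j]$ lies in the algebra and is compact by hypothesis.
\item Its vacuum coefficient is $\sum_j\|P_{Q_I}z_j\|^2$, which is positive by non-degeneracy of $I$. So the compact is nonzero.
\item Irreducibility then puts $\mathbb K(Q_I)$ in the algebra, hence $P_0$, hence $1=(1-P_0)+P_0$.
\end{itemize}
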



\begin{rem}\label{rem:Equiv_Toeplitz}
Note that the Toeplitz extension of $C(L(Q_I))$ is $\mathbb{T}$-equivariant. Indeed, the Toeplitz algebra $\mathcal{T}(Q_I)$ admits a canonical $\mathbb{T}$-action defined on generators as $z\cdot R_j:=zR_j$, for $z\in \mathbb T$. This preserves $\mathbb{K}(Q_I)$. Moreover, $L(Q_I)$ admits a $\mathbb T$-action by scalar multiplication for which the symbol map $\sigma$ is $\mathbb T$-equivariant. Moreover, the $\mathbb T$-action on $L(Q_I)$ is free.
\end{rem}

The special version of the Arveson--Douglas conjecture deals with radical homogeneous ideals. This can be rephrased in terms of projective varieties through the Nullstellensatz. 

\begin{conj}
Let $X\subset \mathbb P^{d-1}$ be a projective variety. Then, the $\mathcal{R}_d$-module $Q_{I(X)}$ is $p$-essentially normal, whenever $p>\dim_{\mathbb C} \widehat{X}$.
\end{conj}

Another well-known feature of the radical setting is that $Q_I$ is also a reproducing-kernel Hilbert space. 

\begin{prop}[{e.g. \cite{Har22}}]
Let $I\triangleleft \mathcal{R}_d$ be radical and homogeneous. Then, $Q_I$ is a reproducing-kernel Hilbert space with kernel the restriction $K_{Q_I}$ of $K_{H_d^2}$ on $V_{\mathbb A}(I)\cap \mathbb B_d$. Therefore, $$Q_I\cong \overline{\mathrm{span}}\{K_{Q_I,w}:w\in V_{\mathbb A}(I)\cap \mathbb B_d\}.$$ 
\end{prop}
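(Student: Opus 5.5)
The plan is to identify $Q_I$ with a concrete quotient of $H_d^2$ and then use the reproducing property of $H_d^2$ to see that this quotient is precisely the restriction of $H_d^2$-functions to $V:=V_{\mathbb A}(I)\cap\mathbb B_d$. Concretely, we may realise $Q_I$ as the orthogonal complement $\overline{I}^{\perp}\subset H_d^2$. Consider the restriction map
\[
\rho: H_d^2\to \{\text{functions on } V\},\qquad f\mapsto f|_V .
\]
The strategy has three steps:
\begin{itemize}
\item[(i)] show $\ker\rho=\overline{I}$;
\item[(ii)] deduce that $\rho|_{\overline I^\perp}$ is injective;
\item[(iii)] equip $\rho(H_d^2)$ with the norm transported from $\overline I^\perp$ and compute its kernel.
\end{itemize}
By the standard theory of restrictions of RKHSs (the quotient-norm construction), $\rho(H_d^2)$ with the norm $\|\rho f\|:=\inf\{\|g\|: \rho g=\rho f\}$ is exactly the RKHS on $V$ with kernel $K_{H_d^2}|_{V\times V}$. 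Hence (i) is what identifies it with $Q_I$.

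\textbf{Step (i): $\ker\rho=\overline{I}$.} The inclusion $\overline I\subseteq\ker\rho$ is straightforward. Polynomials in $I$ vanish on $V_{\mathbb A}(I)$. Point evaluations $f\mapsto f(w)=\langle f,K_{H_d^2,w}\rangle$ are continuous, so limits of elements of $I$ also vanish on $V$.

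For the converse, I would use homogeneity. Let $f\in H_d^2$ vanish on $V$, and write $f=\sum_n f_n$ with $f_n$ homogeneous of degree $n$; this is an orthogonal decomposition in $H_d^2$. Since $V$ is a cone intersected with the ball, for $w\in V$ and $|\lambda|\le 1$ the point $\lambda w$ also lies in $V$. Then $0=f(\lambda w)=\sum_n \lambda^n f_n(w)$ for all such $\lambda$, so each $f_n$ vanishes on $V$. As $f_n$ is homogeneous, it vanishes on all of $V_{\mathbb A}(I)$. By the Nullstellensatz and radicality of $I$, $f_n\in I$ (more precisely $f_n\in I_n$). Therefore $f=\lim_N\sum_{n\le N}f_n\in\overline I$, the series converging in $H_d^2$.

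\textbf{Steps (ii) and (iii).} By (i), $\rho|_{\overline I^\perp}$ is injective and $\rho(\overline I^\perp)=\rho(H_d^2)$. The minimal-norm preimage of $\rho f$ is $P_{\overline I^\perp}f$, so transporting the norm makes $\rho|_{\overline I^\perp}$ a unitary onto the restricted RKHS. For $w\in V$, the function $P_{Q_I}K_{H_d^2,w}$ reproduces point evaluation at $w$ on $Q_I$: for $g\in Q_I$,
\[
\langle g,P_{Q_I}K_{H_d^2,w}\rangle=\langle g,K_{H_d^2,w}\rangle=g(w).
\]
Consequently the kernel of $Q_I$ viewed as functions on $V$ is
\[
\langle P_{Q_I}K_w,P_{Q_I}K_z\rangle=(P_{Q_I}K_w)(z)=K_{H_d^2}(z,w)-(P_{\overline I}K_w)(z)=K_{H_d^2}(z,w),
\]
since elements of $\overline I$ vanish on $V$. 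This gives the restricted kernel $K_{Q_I}$.

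Density of $\mathrm{span}\{K_{Q_I,w}:w\in V\}$ follows at once: if $g\in Q_I$ is orthogonal to every $K_{Q_I,w}$, then $g|_V=0$, so $g\in\overline I\cap\overline I^\perp=0$.

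The main obstacle is Step (i), specifically the passage from ``$f$ vanishes on $V$'' to ``$f\in\overline I$''. This requires both the degree-wise splitting, via the circular and radial invariance of the cone, and radicality through the Nullstellensatz. Without radicality the statement genuinely fails: for example, $I=(z^2)$ gives $\ker\rho\ni z\notin\overline I$.
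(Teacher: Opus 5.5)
Your argument is correct and complete. The paper gives no proof of its own and simply refers to Hartz's survey; what you give is the standard argument behind that citation: identify $\overline{I}$ with the functions in $H_d^2$ vanishing on $V_{\mathbb A}(I)\cap\mathbb B_d$ using the homogeneous expansion (via the circular and radial invariance of the cone) together with radicality and the Nullstellensatz, then read off $Q_I=\overline{I}^{\perp}$ as the restriction space with kernel $K_{H_d^2}$ restricted to $(V_{\mathbb A}(I)\cap\mathbb B_d)\times(V_{\mathbb A}(I)\cap\mathbb B_d)$.
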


We now present the cases for which the conjecture has been verified. The conjecture is still open and no counterexamples have been found so far.

\begin{theorem}[Affirmative answers]\label{thm:aff_cases}
Let $I\triangleleft \Rd$ be a homogeneous ideal. The quotient module $\mathcal{Q}_I$ satisfies the Arveson--Douglas conjecture in the following cases:
\begin{enumerate}
    \item for monomial ideals \cite{WA05};
    \item for principal ideals, or $\dim_{\mathbb C}V_{\mathbb A}(I)\leq 1$, or $d\leq 3$ \cite{GW08};
    \item when $V_{\mathbb A}(I)$ is a finite union of linear subspaces by \cite{KS12};
    \item when $V_{\mathbb A}(I)$ is smooth away from the origin \cite{EE15}, \cite{DTY16}.
\end{enumerate}
\end{theorem}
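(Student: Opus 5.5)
Since this theorem collects results from the literature, the proof I have in mind is not a new argument. It consists of matching each item to the precise statement in the cited source and checking that the source's formulation agrees with the one used here. By ``the one used here'' I mean Conjecture \ref{conj:ADmain}: Schatten class membership of $[R_i,R_j^*]$ on the quotient $Q_I=H^2_d/\overline{I}$, with threshold $p>\dim_{\mathbb C}V_{\mathbb A}(I)$.

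I will go through the four cases in order.

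(1) For monomial ideals, Arveson \cite{WA05} proves $p$-essential normality of $Q_I$ for $p>d$. To reach the sharper threshold $p>\dim_{\mathbb C}V_{\mathbb A}(I)$, I would use the refinements in the cited literature, notably Douglas' work on monomial quotients.

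(2) For principal ideals, for $\dim_{\mathbb C}V_{\mathbb A}(I)\le 1$, and for $d\le 3$, Guo--Wang \cite{GW08} state exactly the required threshold. However, several of their statements concern the submodule $\overline{I}$ rather than the quotient.

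(3) For finite unions of linear subspaces, the result is Kennedy--Shalit \cite{KS12}.

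(4) When $V_{\mathbb A}(I)$ is smooth away from the origin, the result is Engli\v{s}--Eschmeier \cite{EE15}, with an independent proof by Douglas--Tang--Yu \cite{DTY16}. Both give the threshold $p>\dim_{\mathbb C}V_{\mathbb A}(I)$ in the Drury--Arveson setting.

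The one genuine step is passing between submodule and quotient formulations. For this I would use the standard identity
\[
[R_i,R_j^*] = P_{Q_I}[M_{z_i},M_{z_j}^*]|_{Q_I} - P_{Q_I}M_{z_i}P_{\overline I}M_{z_j}^*|_{Q_I}.
\]
This follows from $M_{z_j}^*Q_I\subseteq Q_I$, which holds because $\overline I$ is invariant. Combined with Arveson's bound $[M_{z_i},M_{z_j}^*]\in\mathcal S_p$ for $p>d$, the identity shows that $p$-essential normality of $Q_I$ is equivalent to $P_{\overline I}M_{z_j}^*|_{Q_I}\in\mathcal S_{2p}$. That cross-term condition is the same one that governs essential normality of $\overline{I}$. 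For the sharp thresholds $p\le d$, however, the term $[M_{z_i},M_{z_j}^*]$ cannot simply be discarded. In that case I would quote the quotient-module statements of the cited works directly, in the form collected in \cite[Proposition~1.04]{Wang18}, rather than derive them.

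I expect the main obstacle to be purely bibliographic: confirming that each cited theorem is stated for the quotient module, with the sharp threshold $\dim_{\mathbb C}V_{\mathbb A}(I)$, and without an extra radicality hypothesis. Where the original source gives only a weaker threshold, the item should be attributed jointly with the later paper that sharpened it. For (3) and (4) in particular, I need to confirm whether the ideal must be taken radical. If so, the statement should be read through the Nullstellensatz as concerning $I(V_{\mathbb A}(I))$, which is the form used in Conjecture \ref{conj:GADintro}.
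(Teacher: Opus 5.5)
Your plan matches the paper's treatment of this statement. It is a compilation of known results supported only by the citations, and the paper gives no argument of its own.

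The one step you actually carry out, however, is wrong. You invoke $M_{z_j}^*Q_I\subseteq Q_I$, but that gives $P_{\overline I}M_{z_j}^*|_{Q_I}=0$. So your correction term $P_{Q_I}M_{z_i}P_{\overline I}M_{z_j}^*|_{Q_I}$ vanishes identically, and your identity collapses to $[R_i,R_j^*]=P_{Q_I}[M_{z_i},M_{z_j}^*]|_{Q_I}$. This cannot be right: combined with Lemma \ref{lem:VpowerAD}, it would settle Conjecture \ref{conj:GADintro} outright.

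The correct computation uses the invariance only in $R_iR_j^*=P_{Q_I}M_{z_i}M_{z_j}^*|_{Q_I}$. It then inserts $P_{Q_I}=1-P_{\overline I}$ into $R_j^*R_i=P_{Q_I}M_{z_j}^*P_{Q_I}M_{z_i}|_{Q_I}$. Writing $B_i:=P_{\overline I}M_{z_i}|_{Q_I}\colon Q_I\to\overline I$, this gives
\[
[R_i,R_j^*]=P_{Q_I}[M_{z_i},M_{z_j}^*]|_{Q_I}+B_j^*B_i,
\]
which is a rearrangement of the case $m=1$ of \eqref{eq:VpowerAD_9}. Hence, wherever the compressed commutator lies in $\mathcal S_p$, $p$-essential normality of $Q_I$ is equivalent to $B_i\in\mathcal S_{2p}$ for all $i$. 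Taking $i=j$ gives one direction and H\"older the other. It is not equivalent to the vacuous condition on $P_{\overline I}M_{z_j}^*|_{Q_I}$ that you state.

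This also changes your diagnosis of the sharp threshold, because the compressed term is not the obstruction. The argument of Lemma \ref{lem:VpowerAD} uses only homogeneity of $I$ and the Hilbert--Serre bound $\dim_{\mathbb C}(Q_I)_n=O(n^{\dim_{\mathbb C}V_{\mathbb A}(I)-1})$. It therefore puts $P_{Q_I}[M_{z_i},M_{z_j}^*]|_{Q_I}$ in $\mathcal S_p(Q_I)$ for every $p>\dim_{\mathbb C}V_{\mathbb A}(I)$. So at the sharp threshold, $Q_I$ is $p$-essentially normal if and only if every $B_i\in\mathcal S_{2p}$.

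Submodule results fail to transfer for a different reason. On $\overline I$ one has $[M_{z_i}|_{\overline I},(M_{z_j}|_{\overline I})^*]=P_{\overline I}[M_{z_i},M_{z_j}^*]|_{\overline I}-B_iB_j^*$. Submodule theorems are stated only for $p>d$, so they yield $B_i\in\mathcal S_{2p}$ only for $p>d$. Your fallback of quoting the quotient-module statements directly is therefore the right one, but for this reason rather than the one you give.

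Your bibliographic caveats are well placed, in particular the sharp threshold in (1) and the radicality hypothesis in (3) and (4). You should also verify, rather than assert, that \cite{DTY16} is an $H^2_d$ result with the sharp threshold.
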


 Note that in \cite{KS12}, it is proved that the conjecture is preserved by taking finite unions and the closedness assumption.

\section{Arveson--Douglas conjecture and Veronese embeddings}
In this section, we aim to show that the geometric Arveson--Douglas conjecture is invariant under Veronese embeddings. This approach bootstraps the class of known examples for which the conjecture holds. In particular, it leads to new highly singular examples. Also, it reduces the conjecture to the quadratic case.

To begin, let $k\geq 1$ and define the set $\Lambda_k:=\{\alpha \in \mathbb N^d:|\alpha|=k\}$ equipped with the lexicographic order. The $k$-Veronese embedding $\mathbb P^{d-1}\to \mathbb P^{N_k-1}$ is given by $$[z_1:\ldots :z_d]\mapsto [z^{\alpha}]_{\alpha\in \Lambda_k},$$ where $N_k={d+k-1\choose k}.$ For our purpose here, we will consider the normalised Veronese embeddings $\nu_k:\mathbb P^{d-1}\to \mathbb P^{N_k-1}$, given by 
\begin{equation}\label{eq:normVmap}
\nu_k([z_1:\ldots :z_d]):=[\lambda_k(\alpha)z^{\alpha}]_{\alpha\in \Lambda_k},\qquad \lambda_k(\alpha):=\sqrt{\frac{k!}{\alpha!}}.    
\end{equation}
Up to a linear change of coordinates, the maps $\nu_k$ are the same as the classical Veronese maps.

A classical fact relevant to our work is the following. 

\begin{lemma}\label{lem:Vdim}
Let $I\triangleleft \mathcal{R}_d$ be homogeneous and consider the graded algebra $A:=\mathcal{R}_d/I=\bigoplus_{n\geq 0} A_n$. Then, the $k$-th Veronese power $A^{(k)}:=\bigoplus_{n\geq 0} A_{kn}$ has the same Krull dimension as $A$. Consequently, $\dim_{\mathbb C} \mathrm{Spec}(A^{(k)})=\dim_{\mathbb C} \mathrm{Spec}(A).$ 
\end{lemma}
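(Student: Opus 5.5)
The plan is to show that $A$ is integral over the subalgebra $A^{(k)}$ and that $A^{(k)}\subseteq A$ is an inclusion of rings. Krull dimension is invariant under integral extensions (Cohen--Seidenberg: lying over, going up and incomparability), so this gives $\dim A^{(k)}=\dim A$.

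First, $A^{(k)}=\bigoplus_{n\ge0}A_{kn}$ is a graded subring of $A$, because $A_{kn}A_{km}\subseteq A_{k(n+m)}$. Second, $A$ is generated as an algebra by the images $\bar z_1,\dots,\bar z_d$ of the coordinates. Each $\bar z_i$ satisfies the monic equation $T^k-\bar z_i^{\,k}=0$, and the coefficient $\bar z_i^{\,k}$ lies in $A_k\subseteq A^{(k)}$. Hence every generator is integral over $A^{(k)}$, and so $A$ is integral over $A^{(k)}$. It is also finite over $A^{(k)}$, being generated as a module by the finitely many monomials $\bar z^\beta$ with all $\beta_i<k$.

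Third, I would invoke the standard fact that if $R\subseteq S$ is an integral extension, then $\dim R=\dim S$. The inequality $\dim S\le\dim R$ holds because contracting a chain of primes in $S$ gives a chain in $R$, and incomparability keeps the inclusions strict. The reverse inequality $\dim R\le\dim S$ follows from lying over and going up. Both rings are finitely generated $\mathbb C$-algebras: $A^{(k)}$ is generated by the finitely many elements $\bar z^\alpha$ with $\alpha\in\Lambda_k$. Their Krull dimensions are therefore finite and equal the dimensions of the corresponding affine schemes, which gives the stated equality $\dim_{\mathbb C}\mathrm{Spec}(A^{(k)})=\dim_{\mathbb C}\mathrm{Spec}(A)$.

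There is no serious obstacle. The only point needing care is to confirm that $A^{(k)}$ really is a subring and that $\mathrm{Spec}(A^{(k)})$ is the affine variety meant in the sequel. For the intended application, $A^{(k)}\cong\mathcal R_{N_k}/J$, where $J$ is the kernel of $w_\alpha\mapsto\lambda_k(\alpha)\bar z^\alpha$, and the zero set of $J$ is the affine cone over $\nu_k(V_{\mathbb P}(I))$. As an alternative check, one could also compare Hilbert polynomials: $\dim_{\mathbb C}A_{kn}$ is a polynomial in $n$ of the same degree as $\dim_{\mathbb C}A_n$ for large $n$. This gives the same conclusion via Hilbert--Serre.
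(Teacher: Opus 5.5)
Your proof is correct and complete. The paper gives no proof of this lemma; it quotes it as a classical fact. So the only comparison available is with the Hilbert-series reasoning the paper uses elsewhere: Hilbert--Serre in Lemma~\ref{lem:VpowerAD}, and the Hilbert-series numerator in Example~\ref{ex:2}. That reasoning is essentially your ``alternative check''.

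Your integrality route is cleaner and more general. Every homogeneous $a\in A_n$ satisfies $a^k\in A_{kn}$, so the argument works for any commutative $\mathbb N$-graded ring, with no need for generation in degree one. Lying over also gives the surjectivity of $\widehat{\nu_k}\colon \widehat{X}\to \widehat{\nu_k(X)}$ onto the zero set of $J$. It is worth saying this explicitly, because your identification of that zero set with the affine cone over $\nu_k(V_{\mathbb P}(I))$ relies on it.

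The Hilbert-function route gives something yours does not. It yields the growth $\dim_{\mathbb C}A_{kn}\sim c\,k^{\dim A-1}n^{\dim A-1}$ directly. That growth exponent is exactly what controls the Schatten threshold in Lemma~\ref{lem:VpowerAD} and Theorem~\ref{thm:VpowerAD}. It also tracks the leading coefficient, so it gives $\deg \nu_k(X)=k^{\dim X}\deg X$, which is used in Example~\ref{ex:2}.
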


Another basic fact that generalises \cite[Proposition 5.3]{WA97} is the following. We present the proof as it is instructive.

\begin{lemma}\label{lem:VpowerAD}
Let $X\subset \mathbb P^{d-1}$ be a projective variety, and let $\mathcal{Q}_X$ denote its quotient module, with projection $P_{Q_X}$. Then, for every $1\leq i,j\leq d$ and $p>\dim_{\mathbb C} \widehat{X}$, the multiplication operators satisfy \[ P_{Q_X}[M_{z_i}^*,M_{z_j}]P_{Q_X}\in \mathcal{S}_p(Q_X).\] 
\end{lemma}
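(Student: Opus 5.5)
The plan is to exploit the grading. Since $I(X)$ is homogeneous, the commutator $[M_{z_i}^*,M_{z_j}]$ preserves degree, and its degree-$n$ piece has norm of order $1/n$ and rank bounded by a polynomial in $n$ of degree $\dim_{\mathbb C}\widehat{X}-1$. Summing the Schatten norms block by block will then give the claim.

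\emph{Step 1 (Arveson's formula).} Write $H_d^2=\bigoplus_{n\ge0}H_n$, where $H_n$ denotes the homogeneous polynomials of degree $n$. From the inner product $\langle z^\alpha,z^\beta\rangle=\delta_{\alpha,\beta}\alpha!/|\alpha|!$ we get $M_{z_j}^*z^\alpha=\frac{\alpha_j}{|\alpha|}z^{\alpha-e_j}$ for $|\alpha|\ge1$, and $M_{z_j}^*1=0$. A direct computation on monomials with $|\alpha|=n$ then gives
\[
M_{z_i}^*M_{z_j}z^\alpha-M_{z_j}M_{z_i}^*z^\alpha=\Big(\frac{\alpha_i+\delta_{ij}}{n+1}-\frac{\alpha_i}{n}\Big)z^{\alpha+e_j-e_i}=\frac{1}{n+1}\big(\delta_{ij}-M_{z_j}M_{z_i}^*\big)z^\alpha .
\]
(For $n=0$ the term $M_{z_j}M_{z_i}^*$ vanishes, so the formula still holds.) Writing $N$ for the number operator, this says
\[
[M_{z_i}^*,M_{z_j}]=(N+1)^{-1}\big(\delta_{ij}-M_{z_j}M_{z_i}^*\big).
\]
Hence the commutator maps each $H_n$ into itself, and its restriction $C_n$ to $H_n$ satisfies $\|C_n\|\le 2/(n+1)$, because the $M_{z_j}$ are contractions.

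\emph{Step 2 (compression respects the grading).} Because $I:=I(X)$ is homogeneous, $\overline{I}=\bigoplus_n I_n$ with $I_n=I\cap H_n$. Therefore $Q_X=\bigoplus_n Q_n$ with $Q_n:=H_n\ominus I_n$, and $P_{Q_X}$ commutes with $N$. It follows that $P_{Q_X}[M_{z_i}^*,M_{z_j}]P_{Q_X}=\bigoplus_n P_{Q_n}C_nP_{Q_n}$ is block diagonal. Each block has norm at most $2/(n+1)$ and rank at most $\dim Q_n=\dim_{\mathbb C}(\mathcal R_d/I)_n=:h(n)$, the Hilbert function of $A=\mathcal R_d/I$.

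\emph{Step 3 (Hilbert polynomial and summation).} By Hilbert's theorem, $h(n)$ agrees for large $n$ with a polynomial of degree $\dim A-1$. Here $\dim A$ is the Krull dimension, which equals $m:=\dim_{\mathbb C}\widehat{X}$. So $h(n)\le C(n+1)^{m-1}$ for all $n$. Estimating the singular values block by block,
\[
\big\|P_{Q_X}[M_{z_i}^*,M_{z_j}]P_{Q_X}\big\|_{\mathcal S_p}^p\le\sum_{n\ge0}h(n)\Big(\frac{2}{n+1}\Big)^p\le C\,2^p\sum_{n\ge0}(n+1)^{m-1-p},
\]
which is finite exactly when $p>m$. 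This proves the statement for every $p>\dim_{\mathbb C}\widehat{X}$.

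The only point needing genuine care is the commutator identity in Step 1, in particular the normalisation constants in $M_{z_j}^*$ and the degree-zero case. The identification of the degree of the Hilbert polynomial with $\dim_{\mathbb C}\widehat{X}-1$ is standard commutative algebra. Note that radicality of $I(X)$ is never used, only homogeneity.
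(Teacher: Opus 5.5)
Your proof is correct and follows the same route as the paper. You use the same identity $[M_{z_i}^*,M_{z_j}]P_n=\frac{1}{n+1}(\delta_{ij}I-M_{z_j}M_{z_i}^*)P_n$, the same block-diagonality with respect to the grading (with $P_n$ commuting with $P_{Q_X}$), and the same Hilbert--Serre bound $\dim_{\mathbb C}(Q_X)_n=O(n^{\dim_{\mathbb C}\widehat{X}-1})$; you additionally write out the blockwise singular-value summation, which the paper leaves implicit.
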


\begin{proof}
First, observe that $[M_{z_i}^*,M_{z_j}]$ is diagonal in the decomposition $H_d^2=\bigoplus_{n\geq 0}(H_d^2)_n$ into homogeneous parts. Let $P_n\in \mathbb B(H_d^2)$ be the projection onto $(H_d^2)_n$. Moreover, given $\alpha\in \mathbb N^d$, we have $$M_j(z^{\alpha})=z^{\alpha+e_j},\qquad M_i^*(z^{\alpha})=\frac{\alpha_i}{|\alpha|}z^{\alpha-e_i}.$$ In particular, simple computations give that, for every $n\geq 0$, it holds 
\begin{equation}\label{eq:comm_mult0}
[M_{z_i}^*,M_{z_j}]P_n=\frac{1}{n+1}(\delta_{i,j}I-M_{z_j}M_{z_i}^*)P_n.    
\end{equation}
As the multiplication operators in \eqref{eq:comm_mult0} are contractions, we obtain $\|[M_{z_i}^*,M_{z_j}]P_n\|_{\infty}=O(n^{-1}).$ Since $P_n$ commutes with $P_{Q_X}$ we then have
\begin{equation}\label{eq:comm_mult1}
\|P_nP_{Q_X}[M_{z_i}^*,M_{z_j}]P_{Q_X}P_n\|_{\infty}=O(n^{-1}).   
\end{equation}
Finally, consider the Hilbert series $H_X(t)=\sum_{n\geq 0}\dim_{\mathbb C}((Q_X)_n)t^n$ associated with $X$. Applying the Hilbert--Serre theorem \cite[I.7, Theorem 7.5.]{RH77} to $H_X$, gives that 
\begin{equation}\label{eq:comm_mult2}
\dim_{\mathbb C}((Q_X)_n)=O(n^{\dim_{\mathbb C} \widehat{X}-1}).
\end{equation}
Equations \eqref{eq:comm_mult1} and \eqref{eq:comm_mult2} imply the statement in the proof.
\end{proof}

Recall that $R_1,\ldots,R_d$ are the compressed multiplication operators implementing the $\mathcal{R}_d$-action on $Q_{X}.$ Also, note that $$\mathcal{R}_{N_k}=\mathbb C[w_{\alpha}:\alpha \in \Lambda_k],$$ where each $w_{\alpha}:\mathbb C^{N_k}\to \mathbb C$ is given by $w_{\alpha}((\xi_{\beta})_{\beta \in \Lambda_k})=\xi_{\alpha}.$


\begin{lemma}[{\cite[Lemma 9.16]{YG25}}] \label{lemma: veronese submodule structure}
Let $X\subset \mathbb P^{d-1}$ be a projective variety and $k\geq 1$. Then, the canonical $\mathcal{R}_{N_k}$-action on $Q_X$ leaves $Q_X^{(k)}:=\bigoplus_{l\geq 0} (Q_X)_{kl}$ invariant. In other words, $Q_X^{(k)}$ is a submodule of $Q_X$.

\end{lemma}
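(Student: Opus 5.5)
The plan is to make the canonical $\mathcal{R}_{N_k}$-action on $Q_X$ explicit, and then show that it shifts the grading of $Q_X$ by exactly $k$. Invariance of the divisible-by-$k$ summands then follows directly.

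\emph{Step 1: the action.} Because $\overline{I(X)}$ is invariant under every $M_p$, we have $P_{Q_X}M_p(1-P_{Q_X})=0$. Hence compression to $Q_X$ is multiplicative:
$$P_{Q_X}M_pM_q|_{Q_X}=P_{Q_X}M_pP_{Q_X}M_q|_{Q_X}.$$
In particular $p\mapsto P_{Q_X}M_p|_{Q_X}=p(R_1,\ldots,R_d)$ is a unital homomorphism, and the $R_j$ commute. The canonical $\mathcal{R}_{N_k}$-action is the unital homomorphism determined by
$$w_\alpha\mapsto \lambda_k(\alpha)R^\alpha=\lambda_k(\alpha)P_{Q_X}M_{z^\alpha}|_{Q_X},\qquad \alpha\in\Lambda_k.$$
It is well defined because the $R^\alpha$ commute. (This is the action induced by the normalised Veronese map $\nu_k$.) It therefore suffices to show that each $R^\alpha$ with $|\alpha|=k$ maps $Q_X^{(k)}$ into itself.

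\emph{Step 2: the grading.} Since $I=I(X)$ is homogeneous, $I=\bigoplus_n I_n$ with $I_n\subset(H_d^2)_n$ finite dimensional. The summands $(H_d^2)_n$ are mutually orthogonal, since monomials of different degrees are orthogonal. Hence $\overline{I}=\bigoplus_n I_n$ as an orthogonal Hilbert sum, and
$$Q_X=\bigoplus_n (Q_X)_n,\qquad (Q_X)_n:=(H_d^2)_n\ominus I_n.$$
Consequently $P_{Q_X}$ commutes with each $P_n$, which was already used in the proof of Lemma~\ref{lem:VpowerAD}. Now $M_{z^\alpha}$ maps $(H_d^2)_n$ into $(H_d^2)_{n+k}$, and $P_{Q_X}$ preserves degree. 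Therefore $R^\alpha$ maps $(Q_X)_{kl}$ into $(Q_X)_{k(l+1)}$.

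\emph{Step 3: conclusion.} By Step 2, $R^\alpha$ maps the algebraic direct sum $\bigoplus_l (Q_X)_{kl}$ into $Q_X^{(k)}$. This algebraic sum is dense in $Q_X^{(k)}$, $R^\alpha$ is bounded, and $Q_X^{(k)}$ is closed. Hence $R^\alpha Q_X^{(k)}\subset Q_X^{(k)}$ for every $\alpha\in\Lambda_k$. Since the $w_\alpha$ generate $\mathcal{R}_{N_k}$, every element of $\mathcal{R}_{N_k}$ acts by a polynomial in these operators, so $Q_X^{(k)}$ is a closed invariant subspace, that is, a submodule.

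The only point needing care is Step 1, namely the multiplicativity of compression, which rests on the invariance of $\overline{I}$ under the $M_p$. The rest is bookkeeping with degrees.
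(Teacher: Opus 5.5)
Your proof is correct and uses the same degree-shift argument the paper relies on. The paper only cites this lemma from \cite{YG25}, but in the proof of Theorem~\ref{thm:VpowerAD} it uses exactly your Step~2, namely that $\lambda_k(\alpha)R^{\alpha}$ maps $(Q_X)_{kn}$ into $(Q_X)_{k(n+1)}$. Your Steps~1 and~3 (multiplicativity of compression via invariance of $\overline{I}$, and passing from the algebraic sum to its closure) spell out routine details the paper leaves implicit.
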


We refer to $Q_X^{(k)}$ as the k-th Veronese power of the module $Q$. For the general definition of Veronese powers of graded contractive $\Rd$-modules we refer to the dissertation~\cite[Section~9.3]{YG25}

We are now ready to state and prove our main result concerning the geometric Arveson--Douglas conjecture.

\begin{theorem}\label{thm:VpowerAD}
Let $X\subset \mathbb P^{d-1}$ be a projective variety, $k\geq 1$, and $p>\dim_{\mathbb C} \widehat{X}=\dim_{\mathbb C} \widehat{\nu_k(X)}.$ Then, the $\mathcal{R}_d$-module $Q_{X}$ is $p$-essentially normal if and only if the $\mathcal{R}_{N_k}$-module $Q_{\nu_k(X)}$ is $p$-essentially normal.
\end{theorem}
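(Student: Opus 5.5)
The plan is to realise $Q_{\nu_k(X)}$ unitarily inside $Q_X$ as the Veronese submodule $Q_X^{(k)}$ of Lemma~\ref{lemma: veronese submodule structure}, and then to compare the two commutator conditions. For the unitary I would use the reproducing kernels. The normalisation in \eqref{eq:normVmap} gives
$$\langle \nu_k(z),\nu_k(w)\rangle_{\mathbb C^{N_k}}=\sum_{\alpha\in\Lambda_k}\frac{k!}{\alpha!}z^\alpha\overline{w}^\alpha=\langle z,w\rangle^k ,$$
where I use the lift $\nu_k(z):=(\lambda_k(\alpha)z^\alpha)_\alpha$ on $\mathbb C^d$. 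Hence
$$K_{Q_{\nu_k(X)}}(\nu_k(z),\nu_k(w))=\frac{1}{1-\langle z,w\rangle^k}.$$
On the other side, let $P^{(k)}$ be the projection of $Q_X$ onto $Q_X^{(k)}$. Then
$$\langle P^{(k)}K_{Q_X,w},P^{(k)}K_{Q_X,z}\rangle=\sum_{l\ge0}\langle z,w\rangle^{kl}=\frac{1}{1-\langle z,w\rangle^k},$$
because the degree-$kl$ part of $K_{Q_X,w}$ is $\langle\cdot,w\rangle^{kl}$. I then define $U K_{Q_{\nu_k(X)},\nu_k(w)}:=P^{(k)}K_{Q_X,w}$ for $w\in\widehat X\cap\mathbb B_d$. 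This map is well defined and isometric by the kernel identity.

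Surjectivity of $U$ onto $Q_{\nu_k(X)}$ needs two inputs. First, $\nu_k(\widehat X\cap\mathbb B_d)=\widehat{\nu_k(X)}\cap\mathbb B_{N_k}$, which holds since $\|\nu_k(w)\|=\|w\|^k$ and the cone of $\nu_k(X)$ is the image of $\widehat X$. Second, density of the image in $Q_X^{(k)}$: the vectors $P^{(k)}K_{Q_X,w}$ span a dense subspace, since a vector in $Q_X^{(k)}$ orthogonal to all of them vanishes on $\widehat X\cap \mathbb B_d$.

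Next I check intertwining. By \eqref{eq:adjoints}, $S_\alpha^*$ (the adjoint of the compressed $w_\alpha$ on $Q_{\nu_k(X)}$) has $K_{\nu_k(w)}$ as eigenvector with eigenvalue $\overline{\lambda_k(\alpha)w^\alpha}$. On $Q_X$, the compression $T_\alpha:=P_{Q_X}M_{\lambda_k(\alpha)z^\alpha}|_{Q_X}=\lambda_k(\alpha)R^\alpha$ raises degree by $k$, and its adjoint lowers degree by $k$. Hence $Q_X^{(k)}$ reduces $T_\alpha$, and $T_\alpha^*P^{(k)}K_{Q_X,w}=\overline{\lambda_k(\alpha)w^\alpha}\,P^{(k)}K_{Q_X,w}$. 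Therefore $U S_\alpha U^*=T_\alpha|_{Q_X^{(k)}}$, and $Q_{\nu_k(X)}\cong Q_X^{(k)}$ as $\mathcal R_{N_k}$-modules. Here I use that the compressed action on a quotient module is multiplicative, so $P_{Q_X}M_{z^\alpha}|_{Q_X}=R^\alpha$.

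The direction "$Q_X$ is $p$-essentially normal $\Rightarrow$ $Q_{\nu_k(X)}$ is $p$-essentially normal" is now immediate. By the Leibniz rule and the ideal property of $\mathcal S_p$, $[R^\alpha,(R^\beta)^*]\in\mathcal S_p(Q_X)$. Its restriction to the reducing subspace $Q_X^{(k)}$ is unitarily equivalent to $\lambda_k(\alpha)\lambda_k(\beta)^{-1}[S_\alpha,S_\beta^*]$ up to the obvious scalars. Lemma~\ref{lem:Vdim} guarantees that the thresholds agree.

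For the converse I would reduce both conditions to a Hankel-type condition, using Lemma~\ref{lem:VpowerAD}. Write $P=P_{Q_X}$. Then
$$[R_i,R_j^*]=-P[M_{z_j}^*,M_{z_i}]P+PM_{z_j}^*(1-P)M_{z_i}P .$$
The first term lies in $\mathcal S_p$ by Lemma~\ref{lem:VpowerAD}. So $p$-essential normality of $Q_X$ is equivalent to $H_i:=(1-P)M_{z_i}P\in\mathcal S_{2p}$ for all $i$, because $PM_{z_j}^*(1-P)M_{z_i}P=H_j^*H_i$ and one takes $i=j$ for the reverse implication. The same argument, applied to $\nu_k(X)$ and transported through $U$, shows that $p$-essential normality of $Q_{\nu_k(X)}$ is equivalent to $(1-P)M_{z^\alpha}P^{(k)}\in\mathcal S_{2p}$ for all $\alpha\in\Lambda_k$. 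Here I use that $H^2_{N_k}$ restricted to the $\nu_k$-image is the Veronese part of $H^2_d$, which is the $X=\mathbb P^{d-1}$ case of the unitary above.

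The main obstacle is passing from the Veronese Hankels back to $H_i$ on every residue class $Q_X^{(k,r)}:=\bigoplus_l(Q_X)_{kl+r}$. My first attempt would use the bounded-below maps $f\mapsto P(z^\gamma f)$ with $|\gamma|=k-r$, which send $Q_X^{(k,r)}$ into $Q_X^{(k)}$. The row $\big(P M_{z^\gamma}\big)_{|\gamma|=k-r}$ is, up to the degree weights, an isometry modulo $\mathcal S_p$ (compare Lemma~\ref{lem:VpowerAD}). I would combine this with the factorisation
$$(1-P)M_{z^\gamma}M_{z_i}P=(1-P)M_{z^\gamma}(1-P)M_{z_i}P+(1-P)M_{z^\gamma}PM_{z_i}P ,$$
together with the identity $\sum_\gamma \frac{(k-r)!}{\gamma!}M_{z^\gamma}M_{z^\gamma}^*\approx 1$ on high degrees. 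The goal is to control $H_i|_{Q_X^{(k,r)}}$ by the Veronese Hankels, by an induction on $r$ modulo $\mathcal S_{2p}$.
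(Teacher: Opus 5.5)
Your unitary $U$, the intertwining $US_\alpha U^*=\lambda_k(\alpha)R^\alpha|_{Q_X^{(k)}}$, the forward implication, and the reduction of $p$-essential normality of $Q_X$ to $H_i:=(1-P)M_{z_i}P\in\mathcal S_{2p}$ are correct and match the paper. The Veronese counterpart is also true: $p$-essential normality of $Q_{\nu_k(X)}$ is equivalent to $(1-P)M_{z^\alpha}P^{(k)}\in\mathcal S_{2p}$ for $\alpha\in\Lambda_k$. But transport through $U$ alone does not prove it. The $X=\mathbb P^{d-1}$ unitary lives only on $Q_{\nu_k(\mathbb P^{d-1})}$, so it misses the part of the $H^2_{N_k}$-Hankel landing in $\overline{I(\nu_k(\mathbb P^{d-1}))}$. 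Compute directly on $Q_X^{(k)}$ instead:
\[
P^{(k)}[M_{z^\alpha}^*,M_{z^\beta}]P^{(k)}=[(R^\alpha)^*,R^\beta]|_{Q_X^{(k)}}+P^{(k)}M_{z^\alpha}^*(1-P)M_{z^\beta}P^{(k)}.
\]
The left side lies in $\mathcal S_p$ by the argument of Lemma~\ref{lem:VpowerAD}.

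The genuine gap is the converse. After the reduction you only sketch a plan, and its key ingredient is circular. Individually the maps $f\mapsto P(z^\gamma f)$ need not even be injective: take $I=(z_1z_2)$, $f=z_1^n$, $\gamma=e_2$. As a column, set $c_\gamma:=|\gamma|!/\gamma!$ and $H_\gamma:=(1-P)M_{z^\gamma}P$. Then
\[
\sum_{|\gamma|=k-r}c_\gamma\,(PM_{z^\gamma}P)^*(PM_{z^\gamma}P)=\sum_{|\gamma|=k-r}c_\gamma\,PM_{z^\gamma}^*M_{z^\gamma}P-\sum_{|\gamma|=k-r}c_\gamma\,H_\gamma^*H_\gamma .
\]
The first sum acts as a scalar $1+O(n^{-1})$ on $(Q_X)_n$, so it equals $P$ modulo $\mathcal S_p$. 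Hence ``isometry modulo $\mathcal S_p$'' on $Q_X^{(k,r)}$ is equivalent to the degree-$(k-r)$ Hankels lying in $\mathcal S_{2p}$ there. For $r=k-1$ this is literally $H_i|_{Q_X^{(k,k-1)}}\in\mathcal S_{2p}$, the very statement to be proved, and Lemma~\ref{lem:VpowerAD} does not supply it. Moving from degree $kl+r$ up to $k(l+1)$ is also the wrong direction. The cancellation explained below shows that Veronese Hankels evaluated at $P(z^\gamma f)$ are dominated by higher-order Hankels at $f$, not the reverse, so they cannot control $H_if$.

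What is missing is the exact positivity hidden in your own factorisation, used from the Veronese degree upward. Take $f\in(Q_X)_n$ and $|\gamma|=m-1$. Invariance of $\overline{I}_X$ gives $(1-P)M_{z^\gamma}M_{z_i}f=M_{z^\gamma}H_if+H_\gamma R_if$. After weighting by $c_\gamma$ and summing over $\gamma$, the cross terms vanish identically. The reason is that $\sum_{|\gamma|=m-1}c_\gamma M_{z^\gamma}^*M_{z^\gamma}$ acts on $(H^2_d)_{n+1}$ as a scalar $a_{n+1,m-1}\ge 1$, while $H_if\perp R_if$. With $D_{m,n}:=\sum_{|\alpha|=m}c_\alpha H_\alpha^*H_\alpha|_{(Q_X)_n}$, this is the exact recursion
\[
D_{m,n}=a_{n+1,m-1}D_{1,n}+\sum_{i=1}^d R_i^*D_{m-1,n+1}R_i .
\]
Iterating it gives $D_{k,kl}\ge T_\beta^*D_{1,kl+j}T_\beta$ for all $|\beta|=j\le k-1$, where $T_\beta:=\sqrt{c_\beta}\,R^\beta:(Q_X)_{kl}\to(Q_X)_{kl+j}$. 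Since $\sum_{|\beta|=j}T_\beta T_\beta^*$ is exactly the identity on $(Q_X)_{kl+j}$, one gets
\[
\|D_{1,kl+j}\|_{\mathcal S_p}\le\sum_\beta\|T_\beta^*D_{1,kl+j}T_\beta\|_{\mathcal S_p}\le\#\Lambda_j\,\|D_{k,kl}\|_{\mathcal S_p}.
\]
Summing $p$-th powers over $l\ge 0$ and $0\le j\le k-1$ turns $\bigoplus_l D_{k,kl}\in\mathcal S_p$ into $\sum_iH_i^*H_i=\bigoplus_nD_{1,n}\in\mathcal S_p$. No approximation modulo $\mathcal S_p$ is needed.
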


\begin{proof}
We first aim to build a Hilbert module isomorphism between $Q_{\nu_k(X)}$ and $ Q_{X}^{(k)}$. For this, note that the embedding $\nu_k:\mathbb P^{d-1}\to \mathbb P^{N_k-1}$ lifts to $\widehat{\nu_k}:\mathbb C^d\to \mathbb C^{N_k}$. The lift is a regular homogeneous map, but no longer injective for $k>1$. Moreover, using the multinomial theorem, for every $z,w \in \mathbb C^d$ we obtain that 
\begin{equation}\label{eq:VpowerAD_1}
\langle \widehat{\nu_k}(z),\widehat{\nu_k}(w)\rangle_{\mathbb C^{N_k}}=\sum_{\alpha \in \Lambda_k} \frac{k!}{\alpha!}z^{\alpha}\overline{w^{\alpha}}=\langle z,w \rangle_{\mathbb C^{d}}^k.
\end{equation}
Also, observe that $\widehat{\nu_k}(\widehat{X})=\widehat{\nu_k(X)}\subset \mathbb C^{N_k}$ and that $\widehat{\nu_k}(\widehat{X}\cap \mathbb B_d)=\widehat{\nu_k(X)}\cap \mathbb B_{N_k},$ hence 
\begin{equation}\label{eq:VpowerAD_2}
Q_{\nu_k(X)}=\overline{\mathrm{span}}\{K_{Q_{\nu_k(X)},\widehat{\nu_k}(w)}: w\in \widehat{X}\cap \mathbb B_d\}.
\end{equation}
Also, from \eqref{eq:VpowerAD_1} we have 
\begin{equation}\label{eq:VpowerAD_3}
K_{Q_{\nu_k(X)}}(\widehat{\nu_k}(z),\widehat{\nu_k}(w))=\frac{1}{1-\langle z,w\rangle_{\mathbb C^d}^k}.    
\end{equation}

On the other hand, the $k$-th Veronese power $Q_{X}^{(k)}$ of the subproduct system $Q_{X}$ is also reproduced by the kernel 
\begin{equation}\label{eq:VpowerAD_4}
K_{Q_{X}^{(k)}}(z,w)=\sum_{n\geq 0} \langle z,w \rangle_{\mathbb C^d}^{kn}=\frac{1}{1-\langle z,w\rangle_{\mathbb C^d}^k}.    
\end{equation}

Define now the linear map $U:Q_{\nu_k(X)}\to Q_{X}^{(k)}$ as
\begin{equation}\label{eq:VpowerAD_5}
U(K_{Q_{\nu_k(X)},\widehat{\nu_k}(w)}):=K_{Q_{X}^{(k)},w},\qquad w\in \widehat{X}\cap \mathbb B_d.
\end{equation}
This is a well-defined map and unitary since the kernels by \eqref{eq:VpowerAD_3} and \eqref{eq:VpowerAD_4} agree. We now check that $U$ can be made into a $\mathcal{R}_{N_k}$-module unitary. Let $R_1,\ldots,R_d$ be the compressed multiplication operators implementing the $\mathcal{R}_d$-action on $Q_{X}.$ Also, note that $$\mathcal{R}_{N_k}=\mathbb C[w_{\alpha}:\alpha \in \Lambda_k],$$ where each $w_{\alpha}:\mathbb C^{N_k}\to \mathbb C$ is given by $w_{\alpha}((\xi_{\beta})_{\beta \in \Lambda_k})=\xi_{\alpha},$ and in particular, $$w_{\alpha}(\widehat{\nu_k}(z))=\lambda_k(\alpha) z^{\alpha}, \qquad \lambda_k(\alpha)=\sqrt{\frac{k!}{\alpha !}}.$$ Then, the collection 
\begin{equation}\label{eq:Toep_gen}
 S_{\alpha}:=P_{Q_{\nu_k(X)}}M_{w_{\alpha}}|_{Q_{\nu_k(X)}},\qquad \alpha \in \Lambda_k   
\end{equation}
defines the canonical $\mathcal{R}_{N_k}$-action on $Q_{\nu_k(X)}$. Also, consider the collection 
\begin{equation}\label{eq:Toep_gen1}
T_{\alpha,(k)}:=\lambda_k(\alpha)R^{\alpha}|_{Q_{X}^{(k)}},\qquad \alpha\in \Lambda_k.    
\end{equation}
Here $R^{\alpha}:=R_1^{\alpha_1}\ldots R_d^{\alpha_d}$ and $T_{\alpha,(k)}:(Q_{X})_{kn}\to (Q_{X})_{k(n+1)}$ for every $n\geq 0$, so it acts on $Q_{X}^{(k)}.$ Moreover, from \eqref{eq:adjoints} we obtain that for $w\in \widehat{X}\cap \mathbb B_d$ it holds $$S_{\alpha}^*(K_{Q_{\nu_k(X)},\widehat{\nu_k}(w)})=\overline{w_{\alpha}(\widehat{\nu_k}(w)})K_{Q_{\nu_k(X)},\widehat{\nu_k}(w)}=\overline{\lambda_k(\alpha)w^{\alpha}}K_{Q_{\nu_k(X)},\widehat{\nu_k}(w)}.$$ Similarly, we have that $$T_{\alpha,(k)}^*(K_{Q_{X}^{(k)},w})=\overline{\lambda_k(\alpha)w^{\alpha}}K_{Q_{X}^{(k)},w}.$$ As a result, we have that 
\begin{equation}\label{eq:VpowerAD_6}
 US_{\alpha}U^*=T_{\alpha,(k)}.   
\end{equation}

($\Rightarrow$) Assuming that $Q_X$ is $p$-essentially normal, we claim $p$-essential normality for $Q_{\nu_k(X)}$ which, under \eqref{eq:VpowerAD_6}, is in terms of $\{T_{\alpha,(k)}\}_{\alpha \in \Lambda_k}$. For this, it suffices to show that 
\begin{equation}\label{eq:VpowerAD_7}
[R^{\alpha},(R^{\beta})^*]\in \mathcal{S}_p(Q_{X}^{(k)}),\qquad \alpha,\beta \in \Lambda_k.    
\end{equation}
Indeed, using the Leibniz rule for the commutators, each $[R^{\alpha},(R^{\beta})^*]\in \mathcal{S}_p(Q_{X})$ as it is a finite sum of products of bounded operators with some $[R_i,R_j^*]\in \mathcal{S}_p(Q_{X}),$ where $1\leq i,j\leq d$. Since $[R^{\alpha},(R^{\beta})^*]$ preserves $Q_X^{(k)}$, the assertion \eqref{eq:VpowerAD_7} follows.

($\Leftarrow$) Assume that $Q_{\nu_k(X)}$ is $p$-essentially normal, with $k\geq 2$. For $k=1$, nothing needs to be proved. Let $m\geq 1$ and $\alpha\in \Lambda_m$ be arbitrary, and define $M^{\alpha}:=M_{z_1}^{\alpha_1}\ldots M_{z_d}^{\alpha_d}$. Also, let $\widetilde{M}_{\alpha}:=\lambda_m(\alpha)M^{\alpha}$ and $\widetilde{M}_{\alpha,(m)}:=\widetilde{M}_{\alpha}|_{(H_d^2)^{(m)}}:(H_d^2)^{(m)}\to(H_d^2)^{(m)}.$ 

With respect to the decomposition $(H_d^2)^{(m)}=Q_X^{(m)}\oplus \overline{I}_X^{(m)}$, we have 
\begin{equation}\label{eq:VpowerAD_8}
 \widetilde{M}_{\alpha,(m)}=\begin{pmatrix}
    T_{\alpha, (m)}&0\\
    B_{\alpha,(m)}&C_{\alpha,(m)}
 \end{pmatrix}.   
\end{equation}
Here $T_{\alpha,(m)}$ is the operator in \eqref{eq:Toep_gen1}. Then, for $\alpha,\beta\in \Lambda_m$, using \eqref{eq:VpowerAD_8} and compressing by $P_{Q_X^{(m)}}$, we obtain 
\begin{equation}\label{eq:VpowerAD_9}
 P_{Q_X^{(m)}}[\widetilde{M}_{\alpha,(m)}^*,\widetilde{M}_{\beta,(m)}] P_{Q_X^{(m)}}=[T_{\alpha,(m)}^*,T_{\beta,(m)}] + B_{\alpha,(m)}^*B_{\beta,(m)}.  
\end{equation}
From Lemma \ref{lem:VpowerAD} we see, after using the Leibniz rule for the commutators, that the left hand side of \eqref{eq:VpowerAD_9} is in $\mathcal{S}_p(Q_X^{(m)})$. Therefore, $$[T_{\alpha,(m)}^*,T_{\beta,(m)}]\in \mathcal{S}_p(Q_X^{(m)}) \Leftrightarrow B_{\alpha,(m)}^*B_{\beta,(m)}\in \mathcal{S}_p(Q_X^{(m)}).$$ In particular, our assumption when $m=k$ and the unitary in \eqref{eq:VpowerAD_6}, give 
\begin{equation}\label{eq:VpowerAD_10}
B_{\alpha,(k)}^*B_{\beta,(k)}\in \mathcal{S}_p(Q_X^{(k)}),\qquad \alpha,\beta\in \Lambda_k.    
\end{equation}
We claim that \eqref{eq:VpowerAD_10} implies that 
\begin{equation}\label{eq:VpowerAD_11}
B_{\alpha,(1)}^*B_{\beta,(1)}\in \mathcal{S}_p(Q_X^{(1)}),\qquad \alpha,\beta\in \Lambda_1,    
\end{equation}
and hence $Q_X$ is $p$-essentially normal. Also, it clearly suffices to show \eqref{eq:VpowerAD_11} for $\alpha=\beta$. Therefore, we need a way to iterate \eqref{eq:VpowerAD_10} down to \eqref{eq:VpowerAD_11}.

To this end, we need an even finer decomposition than that in \eqref{eq:VpowerAD_8} of the multiplication operators $M^{\alpha}$, where $\alpha\in \Lambda_m$ and $m\geq 1$ are again arbitrary. Namely, for $n\geq 0$, consider the $n$-homogeneous decomposition $(H_d^2)_n=(Q_X)_n\oplus (\overline{I}_X)_n,$ with respect to which, the operator $\widetilde{M}_{\alpha,n}:=\widetilde{M}_{\alpha}|_{(H_d^2)_n}:(H_d^2)_n\to (H_d^2)_{n+m}$ is written as
\begin{equation}\label{eq:VpowerAD_12}
 \widetilde{M}_{\alpha,n}=\begin{pmatrix}
    T_{\alpha, n}&0\\
    B_{\alpha,n}&C_{\alpha,n}
 \end{pmatrix}.   
\end{equation}
Note that $\widetilde{M}_{\alpha,(m)}=\bigoplus_{l\geq 0} \widetilde{M}_{\alpha,ml},$ and similarly for the entries $T,B,C$. Also, define the positive operator 
\begin{equation}\label{eq:VpowerAD_13}
D_{m,n}:=\sum_{\alpha\in \Lambda_m}B_{\alpha,n}^*B_{\alpha,n}.
\end{equation}
Then, using that 
\begin{align*}
\widetilde{M}_{\beta}\widetilde{M}_{e_i}&=\sqrt{\frac{\beta_i+1}{m}}\widetilde{M}_{\beta+e_i},\qquad \beta\in \Lambda_{l},\,\, e_i\in \Lambda_1,\\
\sum_{i=1}^{d}\widetilde{M}_{e_i}^*\widetilde{M}_{e_i}|_{(H_d^2)_n}&=a_nI,\qquad a_n:=(n+d)(n+1)^{-1},
\end{align*}
(for the second equality see \cite[Proposition 5.3]{WA97}), we obtain that 
\begin{equation}\label{eq:VpowerAD_14}
 D_{m,n}=a_{n+1,m-1}D_{1,n} + \sum_{i=1}^{d}T_{e_i,n}^*D_{m-1,n+1}T_{e_i,n},\qquad a_{i,l}:=a_i\ldots a_{i+l-1}, \,\, a_{i,0}:=1.
\end{equation}
By induction with respect to $m$, \eqref{eq:VpowerAD_14} yields
\begin{equation}\label{eq:VpowerAD_15}
 D_{m,n}=\sum_{j=0}^{m-1}a_{n+j+1,m-j-1} \sum_{\beta\in \Lambda_{j}}T_{\beta,n}^*D_{1,n+j}T_{\beta,n}.
\end{equation}

As $\sum_{\beta\in \Lambda_{j}}T_{\beta,n}T_{\beta,n}^*=P_{(Q_X)_{n+j}}$, each $a_{i,l}\geq 1$ and each $T_{\beta,n}^*D_{1,n+j}T_{\beta,n}\geq 0$, for $0\leq j\leq m-1$, we get
\begin{align*}
    \|D_{1,n+j}\|_{\mathcal{S}_p}&=\|\sum_{\beta\in \Lambda_{j}}D_{1,n+j}^{1/2}T_{\beta,n}T_{\beta,n}^*D_{1,n+j}^{1/2}\|{\mathcal{S}_p}\\
    &\leq \sum_{\beta\in \Lambda_{j}}\|T_{\beta,n}^*D_{1,n+j}T_{\beta,n}\|_{\mathcal{S}_p}\\
    &\leq \# \Lambda_j\|D_{m,n}\|_{\mathcal{S}_p}.
\end{align*}
In particular, for $m=k$, $0\leq j\leq k-1$ and $n=kl$, with $l\geq 0$ be arbitrary, using that $D_{k,(k)}=\bigoplus_{l\geq 0} D_{k,kl}\in \mathcal{S}_p(Q_X^{(k)})$ from \eqref{eq:VpowerAD_10}, we have that 
\begin{equation*}\label{eq:VpowerAD_16}
\|D_{1,kl+j}\|_{\mathcal{S}_p}\leq \# \Lambda_j\|D_{k,kl}\|_{\mathcal{S}_p}<\infty.   
\end{equation*}
More importantly, as $D_{k,(k)}=\bigoplus_{l\geq 0} D_{k,kl}\in \mathcal{S}_p(Q_X^{(k)})$, we obtain that $\bigoplus_{n\geq 0} D_{1,n}\in \mathcal{S}_p(Q_X^{(1)})$, hence \eqref{eq:VpowerAD_11} holds, and the proof is complete.
\end{proof}

From Theorem \ref{thm:VpowerAD} and Proposition \ref{prop:Toeplitz_ext} we obtain a Toeplitz extension for each $L(Q_{\nu_k(X)})$, where $k\geq 1$. In order to understand those spaces, we present the following simple result.

\begin{prop}\label{prop:shadow_space}
Let $X\subset \mathbb P^{d-1}$ be a projective variety. Also, for $k\geq 1$ consider the group $\mu_k\subset \mathbb T$ of $k$-th roots of unity. Then, $$L(Q_{\nu_k(X)})\cong L(Q_{X})/\mu_k,\qquad \text{and}\qquad  L(Q_{\nu_k(X)})/\mathbb T\cong \nu_k(X)\cong X.$$ In particular, the canonical map $L(Q_{\nu_k(X)})\to \nu_k(X)$ is a $\mathbb T$-bundle.
\end{prop}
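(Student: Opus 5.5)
The plan is to use the lifted map $\widehat{\nu_k}:\mathbb C^d\to\mathbb C^{N_k}$ from the proof of Theorem \ref{thm:VpowerAD}, restricted to the shadow space. By \eqref{eq:VpowerAD_1}, $\|\widehat{\nu_k}(z)\|^2=\|z\|^{2k}$. Hence $\widehat{\nu_k}$ sends $\partial\mathbb B_d$ into $\partial\mathbb B_{N_k}$, and it sends $\widehat X\cap\partial\mathbb B_d$ into $\widehat{\nu_k(X)}\cap\partial\mathbb B_{N_k}=L(Q_{\nu_k(X)})$.

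\emph{Surjectivity.} Take $\xi\in\widehat{\nu_k(X)}$ with $\|\xi\|=1$. Since $\widehat{\nu_k}(\widehat X)=\widehat{\nu_k(X)}$, there is $z\in\widehat X$ with $\widehat{\nu_k}(z)=\xi$. Then $\|z\|^{2k}=1$, so $z\in L(Q_X)$.

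\emph{Fibres.} We have $\widehat{\nu_k}(z)=\widehat{\nu_k}(z')$ iff $z'=\zeta z$ with $\zeta\in\mu_k$. One direction is immediate from homogeneity: $\widehat{\nu_k}(\zeta z)=\zeta^k\widehat{\nu_k}(z)$. For the converse, equal images give $\langle z,z'\rangle^k=\langle\widehat{\nu_k}(z),\widehat{\nu_k}(z')\rangle=\|\widehat{\nu_k}(z)\|^2=1$, using \eqref{eq:VpowerAD_1}. So $|\langle z,z'\rangle|=1$ with $\|z\|=\|z'\|=1$. By the equality case of Cauchy--Schwarz, $z'=\zeta z$ for some $\zeta\in\mathbb T$, and then $\zeta^k=1$ (up to conjugation conventions). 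Alternatively, $\nu_k$ is injective on $\mathbb P^{d-1}$, so $z'=cz$ with $c\in\mathbb C^\times$, and comparing norms and images forces $c^k=1$.

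\emph{Homeomorphism.} $\widehat{\nu_k}$ therefore induces a continuous bijection $L(Q_X)/\mu_k\to L(Q_{\nu_k(X)})$. The source is compact, being a quotient of a compact space by a finite group, and the target is Hausdorff, so the map is a homeomorphism.

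\emph{Equivariance and the $\mathbb T$-quotient.} Since $\widehat{\nu_k}(tz)=t^k\widehat{\nu_k}(z)$, the homeomorphism intertwines the action $t\mapsto$ (multiplication by $t^k$) on $L(Q_X)/\mu_k$ with the standard $\mathbb T$-action on $L(Q_{\nu_k(X)})$. This $t^k$-action on $L(Q_X)/\mu_k$ is the well-defined action of $\mathbb T\cong\mathbb T/\mu_k$, and its orbits are exactly the $\mathbb T$-orbits in $L(Q_X)$. It follows that
\[L(Q_{\nu_k(X)})/\mathbb T\cong L(Q_X)/\mathbb T.\]
Every point of $\widehat X\setminus\{0\}$ rescales uniquely to the unit sphere, so $L(Q_X)/\mathbb T\cong(\widehat X\setminus\{0\})/\mathbb C^\times= X$. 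Applying the same reasoning directly to $\nu_k(X)$ gives $L(Q_{\nu_k(X)})/\mathbb T\cong\nu_k(X)$, and $\nu_k(X)\cong X$ because $\nu_k$ is an embedding.

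\emph{Principal bundle structure.} The $\mathbb T$-action on $L(Q_{\nu_k(X)})$ is free (Remark \ref{rem:Equiv_Toeplitz}). For local triviality, I would realise $L(Q_{\nu_k(X)})\to\nu_k(X)$ as the restriction of the Hopf fibration $\partial\mathbb B_{N_k}\to\mathbb P^{N_k-1}$ to $\nu_k(X)$. This is simply the unit circle bundle of the tautological line bundle restricted to $\nu_k(X)$, and it is locally trivial because the Hopf bundle is.

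The main obstacle is the fibre step, namely handling the conjugation convention in \eqref{eq:VpowerAD_1} to get $\zeta\in\mu_k$ cleanly. The fallback argument via injectivity of $\nu_k$ avoids this issue.
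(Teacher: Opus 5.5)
Your argument is correct. The paper states this proposition without proof, and your route is exactly the one implicit in the ingredients it sets up in the proof of Theorem \ref{thm:VpowerAD}: the norm identity $\|\widehat{\nu_k}(z)\|^2=\|z\|^{2k}$ from \eqref{eq:VpowerAD_1}, the equality $\widehat{\nu_k}(\widehat X)=\widehat{\nu_k(X)}$, a compact-to-Hausdorff bijection, and restriction of the Hopf bundle to $\nu_k(X)$. The equality $\widehat{\nu_k}(\widehat X)=\widehat{\nu_k(X)}$ holds because if $\xi=c\,\widehat{\nu_k}(z)$ with $c\in\mathbb C^\times$, then $\xi=\widehat{\nu_k}(c^{1/k}z)$.

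The conjugation issue you flag in the fibre step is not an obstacle. If $z'=\zeta z$, then $\langle z,z'\rangle=\bar\zeta$, and $\bar\zeta^k=1$ if and only if $\zeta^k=1$, since $\mu_k$ is closed under complex conjugation.
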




Applying Proposition \ref{prop:shadow_space} to $I=(0)\triangleleft \mathcal{R}_2$ we obtain the following $KK$-theoretic result. For details on $KK$-theory we refer to \cite{Bla98}.

\begin{cor}\label{cor:B}
For $k\geq 1$, the space $L(Q_{\nu_k(\mathbb P^1)})$ is homeomorphic to the lens space $L(k,1)$. Moreover, the Toeplitz algebra $\mathcal{T}(Q_{\nu_k(\mathbb P^1)})$ is not $KK$-equivalent to $\mathbb C$, when $k\geq 2$. 
\end{cor}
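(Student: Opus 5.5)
We prove the two assertions of Corollary \ref{cor:B} separately: first identify the shadow space topologically, then read off the $K$-theory of the Toeplitz algebra from the Toeplitz extension.

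\textbf{Step 1: the shadow space.} Take $X=\mathbb P^1$, so $I(X)=(0)\triangleleft\mathcal R_2$ and $L(Q_X)=\partial\mathbb B_2=S^3$. The $\mathbb T$-action on $L(Q_X)$ is scalar multiplication, so the subgroup $\mu_k$ acts on $S^3$ by $(z_1,z_2)\mapsto(\zeta z_1,\zeta z_2)$. By Proposition \ref{prop:shadow_space}, $L(Q_{\nu_k(\mathbb P^1)})\cong S^3/\mu_k$. This is by definition the lens space $L(k,1)$.

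To see the identification concretely, one can check that $z\mapsto \widehat{\nu_k}(z)/\|\widehat{\nu_k}(z)\|$ restricted to $S^3$ works. By \eqref{eq:VpowerAD_1}, $\|\widehat{\nu_k}(z)\|=\|z\|^k=1$ on $S^3$, so no normalisation is actually needed. This map is continuous and surjective onto $\widehat{\nu_k(\mathbb P^1)}\cap\partial\mathbb B_{N_k}$. Its fibres are exactly the $\mu_k$-orbits, by the kernel identity $\langle\widehat{\nu_k}(z),\widehat{\nu_k}(w)\rangle=\langle z,w\rangle^k$. Compactness of $S^3/\mu_k$ and Hausdorffness of the target then give a homeomorphism.

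\textbf{Step 2: the Toeplitz algebra.} The affine cone of $\nu_k(\mathbb P^1)$ is smooth away from the origin. By Theorem \ref{thm:aff_cases}(4), or alternatively Theorem \ref{thm:VpowerAD} together with Arveson's result for $(0)$, the quotient module $Q_{\nu_k(\mathbb P^1)}$ is essentially normal. Proposition \ref{prop:Toeplitz_ext} therefore gives the extension
\[
0\to\mathbb K\to\mathcal T\to C(L(k,1))\to 0 .
\]
The six-term exact sequence in $K$-theory reads
\[
K_0(\mathbb K)=\mathbb Z\to K_0(\mathcal T)\to K^0(L(k,1))\xrightarrow{\ \partial\ }K_1(\mathbb K)=0,
\]
\[
0=K_1(\mathbb K)\to K_1(\mathcal T)\to K^1(L(k,1))\xrightarrow{\ \mathrm{ind}\ }K_0(\mathbb K)=\mathbb Z .
\]
We use the standard computation $K^0(L(k,1))\cong\mathbb Z\oplus\mathbb Z/k$ and $K^1(L(k,1))\cong\mathbb Z$.

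Suppose $\mathcal T$ were $KK$-equivalent to $\mathbb C$. Then $K_0(\mathcal T)=\mathbb Z$ and $K_1(\mathcal T)=0$. Since $\partial=0$, the map $K_0(\mathcal T)\to K^0(L(k,1))$ is surjective, which forces $\mathbb Z$ to surject onto $\mathbb Z\oplus\mathbb Z/k$. That is impossible for $k\geq 2$, because $\mathbb Z\oplus\mathbb Z/k$ is not cyclic. This contradiction proves the claim.

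\textbf{Main obstacle.} The only substantive input is the $K$-theory of the lens space; everything else is formal. For $K^0(L(k,1))$ I would use the Atiyah--Hirzebruch spectral sequence. The integral cohomology of $L(k,1)$ is $\mathbb Z,0,\mathbb Z/k,\mathbb Z$ in degrees $0,1,2,3$. In dimension $3$ all differentials vanish for degree reasons, and the extension problem in even degrees is resolved because $H^2$ is the only torsion group. Alternatively, one can quote the classical computation directly.

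Note that the argument never needs to know the index map, which keeps Step 2 free of any analysis of the extension class itself.
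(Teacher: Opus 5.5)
Your proposal is correct and matches the paper's proof: Proposition \ref{prop:shadow_space} with $I=(0)\triangleleft\mathcal R_2$ identifies the shadow space with $S^3/\mu_k=L(k,1)$, and the six-term sequence of the Toeplitz extension makes $K_0(\mathcal T)\to K^0(L(k,1))\cong\mathbb Z\oplus\mathbb Z/k$ surjective, which is impossible from $\mathbb Z$ when $k\geq 2$. One small imprecision: in the Atiyah--Hirzebruch spectral sequence, $d_3\colon H^0\to H^3$ does not vanish for degree reasons (both groups are $\mathbb Z$); it vanishes because classes in $H^0$ come from trivial bundles and are therefore permanent cycles, and quoting the classical computation of $K^*(L(k,1))$ avoids the point.
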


\begin{proof}
Applying Proposition \ref{prop:shadow_space} to $I=(0)\triangleleft \mathcal{R}_2$, we obtain the first part of the statement. The second follows from the Pimsner six-term exact sequence, which gives a surjection of $K_0(\mathcal{T}(Q_{\nu_k(\mathbb P^1)}))$ onto $K_0(L(Q_{\nu_k(\mathbb P^1)}))\cong \mathbb Z\oplus \mathbb Z_k$.     
\end{proof}

Below, we give a new example satisfying the Arveson--Douglas conjecture, as well as an example not covered purely by our mechanism.

\begin{example}[Cartan's umbrella]\label{ex:1} The cubic polynomial 
$q(z_1,z_2,z_3) = z_3 (z_1^2 + z_2^2) - z_1^3$ gives the so-called {\emph{Cartan's umbrella}}, a degree-3 affine surface with a line of singularities along the $x_3$-axis, from which the name \emph{umbrella}.

Set $X=V_{\mathbb P}(q).$ Then, the ideal $I(\nu_2(X)) \triangleleft  \mathbb{C}[y_0,\ldots,y_5]$ is generated by
nine quadrics (for simplicity, we display the generators for the non-normalised Veronese embedding):
\begin{align*}
  & g_1 = y_0 y_3 - y_1^2, &&
  g_2 = y_0 y_4 - y_1 y_2, &&
  g_3 = y_0 y_5 - y_2^2 \\
&  g_4 = y_1 y_4 - y_2 y_3, &&
  g_5 = y_1 y_5 - y_2 y_4, &&
  g_6 = y_3 y_5 - y_4^2 \\
  &f_1 = y_0^2 - y_2^2 - y_2 y_3 - y_4^2, &&
  f_2 = y_0 y_1 - y_1 y_2 - y_3 y_4, &&
  f_3 = y_0 y_2 - y_2^2 - y_4^2.
\end{align*}
The first six polynomials cut out the Veronese surface, while the last three arise from Cartan's equation. 

We see that this example is not covered by the existing theory as 
\begin{enumerate}
    \item the radical ideal $I(\nu_2(X))$ is not principal.
    \item The variety is a projective curve of degree $6$. Its affine cone satisfies $\dim_{\mathbb C} \widehat{\nu_2(X)}=2$. $d=6$;
    \item The variety $\widehat{\nu_2(X)}$ is not a union of finitely many linear subspaces, since $\widehat{\nu_2(X)}$ is irreducible and itself non-linear;
    \item The variety $\widehat{\nu_2(X)}$ is not smooth away from the origin.
\end{enumerate}
\end{example}

Now denote by $\mathbf{ProjVar_{\mathbb C}}$ the set of projective varieties over $\mathbb C$ and by $\mathbf{AE}$ the set of affirmative examples for which the conjecture has been verified, see Theorem~\ref{thm:aff_cases}. The Veronese operation offers a way to bootstrap $\mathbf{AE}$ to a strictly larger class $\mathbf{Ver(AE)}$ not restricted by dimension, number of generators, and smoothness. However, for now we have $$\mathbf{AE}\subsetneq \mathbf{Ver(AE)} \subsetneq \mathbf{ProjVar_{\mathbb C}}.$$

\begin{example}\label{ex:2}
For $d=2,k=3$, consider the twisted cubic $\nu_3(\mathbb P^1)\subset \mathbb P^3$. Recall that $L(Q_{\nu_3(\mathbb P^1)})\cong L(3,1)$. Also, consider the projective cone $\mathrm{Cone}(\nu_3(\mathbb P^1))\subset \mathbb P^4$; the projective closure of $\widehat{\nu_3(\mathbb P^1)}\subset \mathbb A^4$ in $\mathbb P^4$, built from homogenising its ideal with an additional variable. The following holds:
\begin{align*}
I(\nu_3(\mathbb P^1))&=(\sqrt{3}z_1z_3-z_2^2,3z_1z_4-z_2z_3,\sqrt{3}z_2z_4-z_3^2)\subset \mathcal{R}_4\\
J:=I(\mathrm{Cone}(\nu_3(\mathbb P^1)))&=(\sqrt{3}z_1z_3-z_2^2,3z_1z_4-z_2z_3,\sqrt{3}z_2z_4-z_3^2)\subset \mathcal{R}_5.
\end{align*}
We have that $X:=\mathrm{Cone}(\nu_3(\mathbb P^1))\notin \mathbf{AE}$. Indeed, $J$ is generated by quadratic polynomials and it is not principal. Also, $J$ is prime, hence $\widehat{X}$ is irreducible and so clearly not linear. Further, $\dim_{\mathbb C} \widehat{X}=3, d=5$ and $\widehat{X}$ is singular along the affine line in $\mathbb C^5$ of the new coordinate. 

Also, $X\notin \mathbf{Ver(AE)}$ since it cannot be a proper Veronese image (i.e. for $k>1$). Assume to the contrary that there is projective variety $Y$ such that $X=\nu_k(Y)$ for some $k\geq 2$. Then, from the numerator $h$ in Hilbert's series, we get $$\deg (X)=k^2 \deg(Y).$$ But since $\deg(X)=\deg(\nu_3(\mathbb P^1))=3$, this cannot happen.

Nevertheless, by direct computation, it is possible to check that the conjecture holds for $X$. We leave this as an exercise to the interested reader.
\end{example}

\subsection{Hilbert module perspective}
\label{thm:veronesePowers}We conclude this section with a brief discussion on how our result may be 
restated in terms of Hilbert modules, in lieu of projective varieties. 
Starting from a homogeneous ideal $I \triangleleft \mathbb{C}[z_1, \dots, z_d]$, 
one constructs the submodule $\overline{I} \subseteq H^2_d$ and then the 
quotient module $\mathcal{Q}:=H^2_d/\overline{I} = \bigoplus_{n = 0}^\infty Q_n$. 
From $\mathcal{Q}$ one can then proceed in two directions: either take its 
$k$-th Veronese power, the submodule $\mathcal{Q}^{(k)}:=\bigoplus_{n = 0}^\infty 
Q_{kn} \subseteq \mathcal{Q}$; or apply the Veronese embedding $\nu_k$ to 
the variety $V(I)$ and form the quotient module $Q_{\nu_k(V(I))}$ directly. 
It is important to distinguish these: $\mathcal{Q}^{(k)}$ is a submodule 
of $\mathcal{Q}$, whereas $Q_{\nu_k(V(I))}$ is a quotient module. The key 
point of Theorem~A is that these two are unitarily isomorphic as 
$\mathcal{R}_{N_k}$-modules, when $I$ is radical, and it is this isomorphism that transfers 
essential normality across Veronese embeddings. Using this algebraic perspective, and since essential normality, as a notion, is not restricted only to quotient modules  (see \cite{Wang18}), we can reformulate the result as follows.

\begin{theorem}
Let $k\geq 1$ and $p>\dim_{\mathbb{C}} V_{\mathbb A}(I)$. Then, $Q$ is a $p$-essentially normal $\mathcal{R}_d$-module, if and only if, $Q^{(k)}$ is a $p$-essentially normal $\mathcal{R}_{N_k}$-module.

\end{theorem}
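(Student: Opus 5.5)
The plan is to rerun the proof of Theorem \ref{thm:VpowerAD} with the unitary $U$ removed. Here $Q=Q_I=H_d^2/\overline{I}$ for a homogeneous ideal $I$, which need not be radical. The $\mathcal{R}_{N_k}$-action on $Q^{(k)}$ is given by Lemma \ref{lemma: veronese submodule structure}, with generators $T_{\alpha,(k)}=\lambda_k(\alpha)R^{\alpha}|_{Q^{(k)}}$ for $\alpha\in\Lambda_k$. So $p$-essential normality of $Q^{(k)}$ means $[T_{\alpha,(k)},T_{\beta,(k)}^*]\in\mathcal{S}_p(Q^{(k)})$ for all $\alpha,\beta\in\Lambda_k$. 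The radical hypothesis and the reproducing-kernel picture were used in the earlier proof only to build $U$. Everything else in that proof is purely graded-module theoretic, so it should go through for arbitrary homogeneous $I$.

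\emph{Forward direction.} Assume $[R_i,R_j^*]\in\mathcal{S}_p(Q)$. By the Leibniz rule, $[R^{\alpha},(R^{\beta})^*]$ is a finite sum of bounded operators times some $[R_i,R_j^*]$, so it lies in $\mathcal{S}_p(Q)$. It is diagonal with respect to the grading up to degree shifts. Since $R^\alpha$ and $(R^\beta)^*$ map $Q^{(k)}$ into itself when $|\alpha|=|\beta|=k$, the commutator restricts to $Q^{(k)}$ and its compression stays in $\mathcal{S}_p$. This is exactly \eqref{eq:VpowerAD_7}. One point needs checking: $T_{\alpha,(k)}^*$ is the restriction of $(R^\alpha)^*$ to $Q^{(k)}$. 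This holds because $Q^{(k)}$ reduces $R^\alpha$, being a direct sum of graded pieces.

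\emph{Converse.} Use the block decomposition \eqref{eq:VpowerAD_12} of $\widetilde{M}_{\alpha,n}$ with respect to $(H_d^2)_n=Q_n\oplus\overline{I}_n$. This needs only that $\overline{I}$ is a graded submodule, since that is what makes $T_{\alpha,n}$ the compression of the multiplier to $Q$.

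Lemma \ref{lem:VpowerAD} is the first step to secure for non-radical $I$. Its proof uses only two facts. The first is the $O(n^{-1})$ estimate on $[M_{z_i}^*,M_{z_j}]P_n$, which holds for any $I$. The second is the Hilbert--Serre bound $\dim Q_n=O(n^{\dim V_{\mathbb A}(I)-1})$, which also holds for arbitrary homogeneous $I$, because Krull dimension ignores nilpotents. So the lemma holds verbatim for $Q_I$. By \eqref{eq:VpowerAD_9}, the hypothesis on $Q^{(k)}$ is then equivalent to $B_{\alpha,(k)}^*B_{\beta,(k)}\in\mathcal{S}_p$, and in particular $D_{k,(k)}=\bigoplus_l D_{k,kl}\in\mathcal{S}_p$.

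Next invoke the identities \eqref{eq:VpowerAD_14}–\eqref{eq:VpowerAD_15}. These only use the multiplier relations on $H_d^2$ and the row-coisometry relation $\sum_{\beta\in\Lambda_j}T_{\beta,n}T_{\beta,n}^*=P_{Q_{n+j}}$. That relation holds for any graded quotient, because $\sum_{\beta\in\Lambda_j}\widetilde{M}_\beta\widetilde{M}_\beta^*$ is the identity on degrees $\ge j$ and $Q$ is co-invariant. Together they give $\|D_{1,kl+j}\|_{\mathcal{S}_p}\le\#\Lambda_j\|D_{k,kl}\|_{\mathcal{S}_p}$ for $0\le j\le k-1$. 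Every degree $n$ has a unique decomposition $n=kl+j$, so summing the $p$-th powers gives
\[
\Bigl\|\bigoplus_{n\ge0}D_{1,n}\Bigr\|_{\mathcal{S}_p}^p\le \Bigl(\sum_{j<k}(\#\Lambda_j)^p\Bigr)\|D_{k,(k)}\|_{\mathcal{S}_p}^p<\infty .
\]
From this, $B_{e_i,(1)}^*B_{e_i,(1)}\in\mathcal{S}_p$ for each $i$, and the polarisation $B_{e_i}^*B_{e_j}$ follows from $|B_{e_j}|\in\mathcal{S}_{2p}$. Applying \eqref{eq:VpowerAD_9} with $m=1$ then gives $p$-essential normality of $Q$.

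The main obstacle I expect is bookkeeping rather than new ideas. One must confirm that no step of the earlier proof secretly used radicality, as opposed to the Hilbert function bound in Lemma \ref{lem:VpowerAD}, and that the exponent threshold $\dim V_{\mathbb A}(I)$ is unchanged, which follows from Lemma \ref{lem:Vdim}.
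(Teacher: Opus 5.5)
Your proposal is correct and is essentially the paper's own argument: the reformulated theorem rests on the $(\Rightarrow)$ and $(\Leftarrow)$ parts of the proof of Theorem \ref{thm:VpowerAD}, which are already carried out on $Q_X^{(k)}$ with the generators $T_{\alpha,(k)}$, while the unitary $U$ (the only place where radicality and the kernel picture enter) serves merely to transfer the hypothesis from $Q_{\nu_k(X)}$. Your explicit summation of $p$-th powers over the residues $n=kl+j$ and the H\"older argument for $B_{e_i,(1)}^*B_{e_j,(1)}$ simply spell out the final step that the paper states tersely.
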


\section{Fundamental classes and Veronese embeddings}\label{sec:KK-duality}
Let $X\subset \mathbb P^{d-1}$ be a projective variety such that the $\mathcal{R}_d$-module $Q_{X}$ is $\infty$-essentially normal; all commutators $[R_i,R_j^*]$ are simply in $\mathbb K (Q_X)$. Then, we have the Toeplitz extension of Proposition \ref{prop:Toeplitz_ext}. At this point, finer Schatten estimates are not required, as we focus on the Fundamental Class Conjecture of Douglas \cite{RD06}, which uses the language of $KK$-theory. This concept falls within the framework of $KK$-duality, which we discuss in Open Problem 3. For an introduction to Hilbert $C^*$-modules and Kasparov's bivariant K-theory, we refer to \cite{Bla98,JT91}.

To simplify the notation, we shall write $A:=C(L(Q_{X}))$ and denote by $\tau$ be its Toeplitz extension~\eqref{eq:toep}. Denote by $m$ the multiplication map on $A$. Then, we obtain the $K$-homology class $$\Delta:=m^*([\tau])\in KK^{\mathbb T}_1(A\otimes A,\mathbb C).$$ 

\begin{conj}[Fundamental Class conjecture]
For each $j=0,1$ the map, 
\begin{equation}\label{eq:fund_class}
-\otimes_{ A} \Delta:KK^{\mathbb T}_j(\mathbb C, A)\to KK^{\mathbb T}_{j+1}(A,\mathbb C),\qquad x\mapsto (x\otimes 1_A)\otimes_{A\otimes A}\Delta,
\end{equation}
is an isomorphism.
\end{conj}
According to \cite[Definition 5.2]{KS19}, such a class $[\tau]$ is indeed viewed as a fundamental class. Also, note that the image of $[1_A]\in KK_0^{\mathbb T}(\mathbb C,A)$ is $[\tau]$. However, the following holds.

\begin{remark}\label{rem:counter_ex}
The following counterexample to the fundamental class conjecture was communicated to us by Xiang Tang. Consider the ideal $I=(z_1,z_2)(z_3,z_4)\triangleleft \mathcal{R}_5$ and note that the Arveson--Douglas conjecture holds for $V_{\mathbb P}(I)$ since $V_{\mathbb A}(I)$ is a finite union of linear subspaces, see \cite{KS12}. Also, note that $L(Q_{I})$ is homeomorphic to the pushout $S^5\cup_{S^1}S^5$. We claim that the fundamental class conjecture is not satisfied, not even the non-equivariant version. Indeed, first observe that $S^5\cup_{S^1}S^5$ is homotopic to $S^5\vee S^5 \vee \Sigma S^1= S^5\vee S^5 \vee S^2$ since both inclusions $S^1\hookrightarrow S^5$ are null-homotopic. Also, we have the split short exact sequence:
\begin{equation}\label{eq:counter_ex}
 0\rightarrow C_0(S^5\setminus \{\mathrm{pt}\})\oplus C_0(S^5\setminus \{\mathrm{pt}\}) \oplus C_0(S^2\setminus \{\mathrm{pt}\})\rightarrow C(S^5\vee S^5 \vee S^2)\rightarrow \mathbb C\rightarrow 0,
\end{equation} 
and hence all the $K$-theory and $K$-homology groups of $L(Q_{I})$ are $\mathbb Z^2$. However, the Bott element $\beta \in KK_0(\mathbb C,C_0(S^2\setminus \{\mathrm{pt}\}))$, viewed as $(0,0,\beta,0)\in KK_0(\mathbb C, C(L(Q_{I})))$ in the $K$-theory decomposition from \eqref{eq:counter_ex} (we place the $K$-theory of the ideal to the left and of $\mathbb C$ to the right), has trivial pairing with $\Delta=m^*([\tau])$, as $[\tau]\in KK_1(C(L(Q_{I})),\mathbb C)$ is supported on the first and second coordinates and the Kasparov product is additive.
\end{remark}

Nevertheless, the FC conjecture still holds hope to be true in interesting cases, as in \cite{DTY16}. Here, we show that Veronese embeddings preserve the property of having a fundamental class from the Toeplitz extension. Similarly as before, for $k\geq 1$, let $A_k:=C(L(Q_{\nu_k(X)}))$ with multiplication map $m_k$, and the associated Toeplitz extension be $\tau_k$, which we know exists from Theorem \ref{thm:VpowerAD}. We then obtain another $K$-homology class $$\Delta_k:=m_k^*([\tau_k])\in KK^{\mathbb T}_1(A_k\otimes A_k,\mathbb C).$$
Note that $\Delta_1=\Delta$. To proceed, we construct a partial Green--Julg isomorphism. Although the statement is written in terms of classical equivariant $KK$-theory, for its proof, we pass through the twisted equivariant $KK$-theory of Chabert--Echterhoff \cite{CE01}. Also, we refrain from stating the lemma in a more general form, as it is not needed for the present work.

\begin{lemma}\label{lem:partialGJ}
Let $G$ be a compact group and $N$ be a finite normal subgroup of $G$. Also, let $B, D$ be separable unital $G$-$C^*$-algebras, with the restricted $N$-action being saturated on $B$ and trivial on $D$. 
Then, we have the isomorphism $$\Phi_{G,N}:KK_*^G(B,D)\to KK_*^{G/N}(B^N,D), \qquad [E,\pi, V, F]\mapsto [E^N,\pi|_{B^N},\overline{V}, P_NFP_N].$$ Here, 
\begin{itemize}
    \item $(E,\pi,F)$ is a Kasparov $B$-$D$-module, and its $G$-equivariant structure is dictated by the continuous $\mathbb C$-linear map $V:G\times E\to E$ which, for $g,h\in G,\,\, x,y\in E$ and $b\in B, d\in D$, satisfies $V_gV_h=V_{gh}$ and 
    $$V_g(xd)=V_g(x) (g\cdot d),\,\, g\cdot \langle x,y \rangle_D=\langle V_g(x),V_g(y)\rangle_D,\,\, V_g(\pi(b)x)=\pi(g\cdot b)V_g(x).$$ Also, the map from $G$ to the algebra of adjointable operators $\mathbb B_D(E)$, given by $g\mapsto (V_gFV_g^*-F)$ is norm-continuous with image in the compact operators $\mathbb K_D(E)$.
    \item $B^N$ is the fixed-point algebra of the $N$-action on $B$, $E^N:=\{\xi\in E: V_n(\xi) =\xi, \,\, n\in N\}$ and $$P_N=|N|^{-1}\sum_{n\in N} V_n\in \mathbb B_D(E)$$ is the orthogonal projection onto $E^N$. The $G/N$-equivariant structure on the Kasparov $B^N$-$D$-module $(E^N,\pi|_{B^N}, P_NFP_N)$ is given by $\overline{V}:G/N\times E^N\to E^N,\,\, (gN,\xi)\to V(g,\xi).$
\end{itemize}
\end{lemma}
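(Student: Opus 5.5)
We will prove Lemma~\ref{lem:partialGJ} by factoring $\Phi_{G,N}$ into two known isomorphisms, passing through Chabert--Echterhoff twisted equivariant $KK$-theory \cite{CE01}. First we check that $\Phi_{G,N}$ is well defined. Since $N$ acts trivially on $D$, each $V_n$ is $D$-linear and unitary, so $P_N=|N|^{-1}\sum_{n\in N}V_n$ is an adjointable projection. Because $N$ is normal, $P_N$ commutes with every $V_g$, so $\overline V$ is a well-defined $G/N$-action on $E^N$. Moreover $\pi(b)$ commutes with $P_N$ for $b\in B^N$. We may average $F$ over the compact group $G$ and assume it is $G$-invariant, in particular $N$-invariant. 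Then $P_NFP_N=F|_{E^N}$, and the Kasparov conditions for $(E^N,\pi|_{B^N},P_NFP_N)$ follow by compressing those of $(E,\pi,F)$ with $P_N$. Finally, homotopies and direct sums are preserved by the same compression, so $\Phi_{G,N}$ is a homomorphism.

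To prove bijectivity, we write $\Phi_{G,N}$ as the composition of three maps.
\begin{itemize}
\item[(i)] By the Green--Julg type isomorphism for the normal subgroup $N$ in twisted equivariant $KK$ (the compact-subgroup analogue of \cite[Green imprimitivity]{CE01}), we get
\[
KK^G(B,D)\cong KK^{(G,N)}(B\rtimes N, D).
\]
Here $D$ carries the trivial twist, since $N$ acts trivially on it. The map sends $E$ to $E\rtimes N$, and for finite $N$ this is simply $E$ with the $B\rtimes N$-action $b\,u_n\mapsto \pi(b)V_n$.
\item[(ii)] By the saturation of the $N$-action on $B$, the projection $p_N=|N|^{-1}\sum_{n} u_n\in B\rtimes N$ is full. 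Hence $B\rtimes N$ is $G/N$-equivariantly (twisted) Morita equivalent to $p_N(B\rtimes N)p_N\cong B^N$, through the imprimitivity bimodule $p_N(B\rtimes N)$.
\item[(iii)] Since the twist on $B^N$ and on $D$ is trivial, twisted $(G,N)$-equivariant $KK$ coincides with ordinary $KK^{G/N}$ \cite{CE01}.
\end{itemize}

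It remains to verify that the composite sends $[E,\pi,V,F]$ to $[E^N,\pi|_{B^N},\overline V,P_NFP_N]$. Tensoring $E$, viewed as a $B\rtimes N$-module, with the Morita bimodule $B^N$--$p_N(B\rtimes N)$ gives $p_N\cdot E=P_NE=E^N$. The operator becomes the compression of $F$ by $P_N$, the left action becomes the restriction to $B^N$, and the group action becomes the induced action $\overline V$. Since the Kasparov product with an imprimitivity bimodule is a compression when $F$ is $N$-invariant, this identification is a routine check.

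We expect the main obstacle to be step (i), namely justifying the partial Green--Julg isomorphism with the correct twisted structure and checking that the isomorphism of \cite{CE01}, usually stated for induction from subgroups, gives exactly this explicit map for a finite normal subgroup. If citing it proves awkward, we will instead construct the inverse directly. Given a $G/N$-equivariant $B^N$-$D$-module $(E',\pi',F')$, we set
\[
E:=p_N(B\rtimes N)^*\otimes_{B^N}E',
\]
which is $(B\rtimes N)p_N\otimes E'$ as a $B$-module, with $G$ acting diagonally. Saturation then gives $E^N\cong E'$. Conversely, for $(E,\pi,F)$ with $F$ $N$-invariant, the map
\[
(B\rtimes N)p_N\otimes_{B^N}E^N\to E
\]
is a unitary, because saturation means that $\overline{\pi(B)E^N}=E$, that is, $E^N$ generates $E$ (this uses fullness of $p_N$). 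The two constructions are mutually inverse up to unitary equivalence and compact perturbation.
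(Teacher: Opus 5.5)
Your proposal is correct and follows essentially the paper's route. Both arguments use the partial dual Green--Julg map $[E,\pi,V,F]\mapsto[E,\pi\rtimes V|_N,V,F]$ into $KK^{(G,N)}(B\rtimes N,D)$, then the Morita equivalence with $B^N$ given by the full projection $p_N$ (saturation), then the identification of trivially twisted $KK^{(G,N)}$ with $KK^{G/N}$, and finally compute the product with $p_N(B\rtimes N)$ as compression by $P_N$. The paper inverts step (i) simply by restricting the $B\rtimes N$-action to $B$. Its connection $W^{-1}P_NFP_NW$ also works without first averaging $F$, since $[V_n,F]$ is already compact.
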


\begin{proof}
As $N$ is finite, the full and reduced crossed products agree, so we simply write $B\rtimes N$. This crossed product admits a canonical $(G,N)$-action in the sense of Chabert--Echterhoff, see \cite[Example 2.10]{CE01}. Specifically, let $\{v_n, n\in N\}$ be the unitaries in $B\rtimes N$ implementing the $N$-action on $B$ by inner automorphisms. The $G$-action on $B\rtimes N$ is given by $$g\cdot (bv_n):=(g\cdot b)v_{gng^{-1}},\qquad g\in G, n\in N$$ and the twisting homomorphism from $N$ into the unitary group $U(B\rtimes N)$ is given by $n\mapsto v_n$, while the restricted $N$-action on $B\rtimes N$ is inner. 

To build the isomorphism $\Phi_{G,N}$ we first construct an isomorphism 
\begin{equation}\label{eq:partialGJ1}
 \mathrm{GJ_{G,N}^{\vee}}:KK_*^G(B,D)\to KK_*^{(G,N)}(B\rtimes N, D).   
\end{equation}
This is a partial version of the dual Green--Julg isomorphism; the proof is the usual integrated-form argument, see \cite[Propositions I.2.13–I.2.14]{Mas25}. Indeed, let $[E,\pi, V,F]\in KK^G(B,D)$ and assume that $\pi$ is non-degenerate. Since $N$ acts trivially on $D$, for $n\in N$, the operators $V_n$ are $D$-linear and so $V_n\in \mathbb B_D(E)$. Moreover, as $(E,\pi, V,F)$ is in particular $N$-equivariant, we have that $V_n\pi(b)V_n^*=\pi(n\cdot b).$ Therefore, $(\pi, V|_N)$ is a covariant representation of the $N$-action on $B$, meaning that we have an integrated non-degenerate representation 
\begin{equation}\label{eq:partialGJ2}
 \pi\rtimes V|_N:B\rtimes N\to \mathbb B_D(E).   
\end{equation}
Note that $\mathbb B_D(E)$ also admits a $(G,N)$-action; for $g\in G$ and $T\in \mathbb B_D(E)$ we define $g\cdot T:= V_gTV_g^*$. Also, the twisting homomorphism is $N\to U(\mathbb B_D(E)),\,\, n\to V_n.$ This turns the representation \eqref{eq:partialGJ2} into a $(G,N)$-equivariant one; meaning that the $G$-actions and the twists get intertwined. We can then define 
\begin{equation}\label{eq:partialGJ3}
\mathrm{GJ_{G,N}^{\vee}}([E,\pi, V,F]):=[E,\pi\rtimes V|_N, V,F].    
\end{equation}
The inverse of $\mathrm{GJ_{G,N}^{\vee}}$ is given by restricting the left $B\rtimes N$-action to just the $B$-action, since $N$ is discrete and $B\subset B\rtimes N$.

Moreover, we have an isomorphism 
\begin{equation}\label{eq:partialGJ4}
KK_*^{(G,N)}(B\rtimes N, D)\cong KK_*^{(G,N)}(B^N,D).    
\end{equation}
Indeed, let $p_N:=|N|^{-1}\sum_{n\in N} v_n\in B\rtimes N.$ Then, it is a standard fact (see \cite{Ros79}) that the map 
\begin{equation}\label{eq:partialGJ5}
 B^N\to p_N (B\rtimes N) p_N,\qquad b\to bp_N   
\end{equation}
is an isomorphism. Also, $B^N$ is invariant under the $G$-action on $B$ since $N$ is normal, which can be made into a $(G,N)$-action with trivial twisting homomorphism $n\mapsto 1_{B^N}$. Similarly, $p_N (B\rtimes N) p_N$ admits a $G$-action: for $g\in G$ and $bp_N\in p_N (B\rtimes N) p_N$ define $g\cdot (bp_N):=(g\cdot b)p_N$. This also becomes a $(G,N)$-action with trivial twist $n\mapsto p_N$. In particular, the isomorphism \eqref{eq:partialGJ5} is $(G,N)$-equivariant. Saturation of the $N$-action on $B$ means that $p_N$ is full, and hence the $E_N:=(B\rtimes N)p_N$ is a $(B\rtimes N)$-$B^N$ imprimitivity bimodule with a $(G,N)$-equivariant structure: the $G$-action on $E_N$ is the continuous $\mathbb C$-linear map $Z:G\times E_N\to E_N,\,\, (g,cp_N)\mapsto (g\cdot c)p_N$ and the (left) $B\rtimes N$-action on $E_N$ is $G$-equivariant and preserves the twists.

Therefore, we define the isomorphism $\Phi_{G,N}:KK_*^G(B,D)\to KK_*^{(G,N)}(B^N,D)$ by
\begin{equation}\label{eq:partialGJ6}
 \Phi_{G,N}([E,\pi,V,F]):=[E_N^*, \lambda,0]\otimes_{B\rtimes N}[E,\pi\rtimes V|_N, V,F].
\end{equation}
Note that $E_N^*=p_N(B\rtimes N)$. Also, $\lambda:B^N\to \mathbb B_{B\rtimes N}(E_N^*)$ is given by left multiplication. To compute the Kasparov product \eqref{eq:partialGJ6} we work as in \cite[Proposition 4.3]{Ger22}. We have that $$P_N:=(\pi\rtimes V|_N)(p_N)=|N|^{-1}\sum_{n\in N}V_n.$$ Also, observe that the map $W:E_N^*\otimes_{B\rtimes N} E\to P_NE,\,\, p_N x\otimes \xi\mapsto P_N ( \pi\rtimes V|_N)(x)\xi$ is unitary. Indeed, it is isometric with dense range since $\pi\rtimes V|_N$ is non-degenerate. Then, if $F'$ is an $F$-connection for $E_N^*$, we have that
\begin{equation}\label{eq:partialGJ7}
 \Phi_{G,N}([E,\pi,V,F])=[E_N^*\otimes_{B\rtimes N}E, \lambda\otimes 1, V_{\mathrm{diag}},F'],
\end{equation}
where $V_{\mathrm{diag}}$ is the diagonal $(G,N)$-action. Simple calculations show that we can choose $F'=W^{-1}P_NFP_NW$. Then, conjugating the $KK^{(G,N)}$-class with $W$ results to $[E^N,\pi|_{B^N},\overline{V}, P_NFP_N]$. Finally, $KK_*^{(G,N)}(B^N,D)\equiv KK_*^{G/N}(B^N,D)$ since the twists in both coordinates are trivial. The proof is now complete.
\end{proof}




The next result is now a consequence of Lemma \ref{lem:partialGJ}, functorial properties of the (dual) Green--Julg isomorphisms, and the unitary $U$ in \eqref{eq:VpowerAD_5} constructed in Theorem \ref{thm:VpowerAD}.

\begin{theorem}\label{thm:Vpower_duality}
Assume that $Q_X$ is $\infty$-essentially normal. Let $k\geq 1$. Then, the fundamental class conjecture holds for $A$ with fundamental class $\Delta$ if and only if it holds for $A_k$ with fundamental class $\Delta_k.$ 
\end{theorem}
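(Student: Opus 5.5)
The plan is to apply Lemma \ref{lem:partialGJ} with $G=\mathbb T$, $N=\mu_k$, and to compare the two duality maps \eqref{eq:fund_class} through the resulting isomorphisms. By Proposition \ref{prop:shadow_space}, $L(Q_{\nu_k(X)})\cong L(Q_X)/\mu_k$, so $A_k\cong A^{\mu_k}$. The $\mu_k$-action on $L(Q_X)$ is free (Remark \ref{rem:Equiv_Toeplitz}), hence saturated on $A$ and also on $A\otimes A$ for the diagonal action. We identify $\mathbb T/\mu_k\cong\mathbb T$ via $z\mapsto z^k$; under this identification the residual action on $A_k$ is the scalar action on $L(Q_{\nu_k(X)})$, because $\widehat{\nu_k}(zw)=z^k\widehat{\nu_k}(w)$.

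With $D=\mathbb C$, which carries the trivial action, Lemma \ref{lem:partialGJ} gives isomorphisms $\Phi:KK^{\mathbb T}_*(A,\mathbb C)\to KK^{\mathbb T}_*(A_k,\mathbb C)$ on the $K$-homology side. On the $K$-theory side, $KK^{\mathbb T}_*(\mathbb C,A)$, I would use the dual statement, namely the Green--Julg-type isomorphism $KK^{\mathbb T}_*(\mathbb C,A)\cong K_*(A\rtimes\mathbb T)$. Freeness gives $A\rtimes\mathbb T\sim_M C(X)$ and $A_k\rtimes\mathbb T\sim_M C(X)$, which combined with restriction to $\mu_k$-invariant sections yields an isomorphism $\Psi:KK^{\mathbb T}_*(\mathbb C,A)\to KK^{\mathbb T}_*(\mathbb C,A_k)$. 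Concretely, $\Psi$ sends a $\mathbb T$-equivariant module $E$ to $E^{\mu_k}$, again with $\mu_k$ acting trivially on $\mathbb C$.

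The first substantive step is to show $\Phi[\tau]=[\tau_k]$. Represent $[\tau]$ by the Toeplitz Fredholm module on $Q_X\oplus Q_X$, or by its odd Kasparov module $(Q_X,\pi,2P-1)$ via a Stinespring dilation. The $\mu_k$-invariant part of $Q_X$ is exactly $Q_X^{(k)}=\bigoplus_l (Q_X)_{kl}$, since $\mathbb T$ acts by $z^n$ on $(Q_X)_n$. The compression $P_N F P_N$ is then the Toeplitz compression to $Q_X^{(k)}$. The unitary $U$ from \eqref{eq:VpowerAD_5}, together with \eqref{eq:VpowerAD_6}, identifies this with the Toeplitz module of $Q_{\nu_k(X)}$. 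Here one must check that the symbol of $T_{\alpha,(k)}$ is $\lambda_k(\alpha)z^\alpha=w_\alpha\circ\widehat{\nu_k}$, so that the left $A_k$-actions match, and that $U$ is $\mathbb T$-equivariant for $z\mapsto z^k$, which follows from homogeneity.

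Next, $\Delta_k=m_k^*[\tau_k]$ and $m$ restricts to $m_k$ on invariants. The target step is a naturality square:
\[
\Phi\big((x\otimes 1_A)\otimes_{A\otimes A}\Delta\big)=(\Psi(x)\otimes 1_{A_k})\otimes_{A_k\otimes A_k}\Delta_k .
\]
I would reduce this to $\Phi(y\otimes_A[\tau])=\Psi(y)\otimes_{A_k}[\tau_k]$ for $y\in KK^{\mathbb T}(\mathbb C,A\otimes A)$, where the slant over the first factor is rewritten using $m$. I expect this compatibility of the partial Green--Julg maps with Kasparov products to be the main obstacle. A product over $A$ of a $\mathbb T$-equivariant cycle restricts on $\mu_k$-fixed points to a product over $A^{\mu_k}$ only after saturation. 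My first attempt would be to compute with explicit cycles: take $x$ to be a $\mathbb T$-equivariant projective $A$-module $pA^n$, so the product is $[\tau]$ twisted by $p$. Taking $\mu_k$-invariants commutes with this twisting because $p$ is $\mu_k$-invariant in the equivariant picture. In general, I would pass through the $(G,N)$-imprimitivity bimodule $E_N$ of Lemma \ref{lem:partialGJ} and use associativity of the Kasparov product.

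Once the square commutes with $\Phi,\Psi$ isomorphisms, $-\otimes_A\Delta$ is bijective if and only if $-\otimes_{A_k}\Delta_k$ is, which gives both directions.
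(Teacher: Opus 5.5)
Your proposal is correct and follows essentially the paper's proof: the same $\Phi$ from Lemma~\ref{lem:partialGJ} with $G=\mathbb T$, $N=\mu_k$, the same argument that $\Phi[\tau]=[\tau_k]$ via an equivariant Stinespring dilation, compression to $Q_X^{(k)}$ and conjugation by $U$, and the same check of the square on equivariant finitely generated projective modules (the paper invokes a unitary $({}_AM_A\otimes_A H)^N\cong ({}_AM_A)^N\otimes_{A^N}H^N$, which your $p$-twisting computation reproduces). The only difference is that you build $\Psi$ through the Morita equivalences $A\rtimes\mathbb T\sim_M C(X)\sim_M A_k\rtimes\mathbb T$ rather than the Chabert--Echterhoff bimodule $X_N$; this is slightly more elementary here because the $\mathbb T$-action is free, and it gives the same map $M\mapsto M^{\mu_k}$.
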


\begin{proof}
Since $Q_X$ is $\infty$-essentially normal, the Toeplitz extensions $\tau_k$ of $A_k$ exist for every $k\geq 1$. Choose some $k\geq 1$ and set $G=\mathbb T, N=\mu_k$. The first claim is the existence of an isomorphism 
\begin{equation}\label{eq:Vpower_duality1}
\Psi_{G,N}: KK_*^G(\mathbb C, A)\to KK_*^{G/N}(\mathbb C,A^N).  
\end{equation}

We begin with the Green--Julg isomorphism (see \cite{Mas25})
\begin{equation}\label{eq:Vpower_duality2}
\mathrm{GJ_G} :KK_*^G(\mathbb C, A)\to KK_*(\mathbb C, A\rtimes G). 
\end{equation}
Also, the $(G,N)$-imprimitivity $(A\rtimes N)$-$A^N$-bimodule $E_N=(A\rtimes N)p_N$ from Lemma \ref{lem:partialGJ} yields an imprimitivity $(A\rtimes G)$-$(A^N\rtimes (G/N))$-bimodule $X_N$. Indeed, from \cite[Examples 2.8, 2.10]{CE01} we have 
\begin{align}
 A\rtimes G &\cong (A\rtimes N)\rtimes (G,N),\\
 A^N\rtimes (G/N)&\cong A^N\rtimes (G,N).
\end{align}
Then, from \cite[Proposition 4.1]{CE01} we obtain that $E_N$ yields such $X_N$. We define $$\Psi_{G,N}:=\mathrm{GJ}_{G/N}^{-1}\circ (-\otimes_{A\rtimes G} [X_N])\circ \mathrm{GJ}_G.$$ For a $G$-equivariant finitely generated projective module $M$ over $A$, we have $\Psi_{G,N}([M])=[M^N].$

Next, $G/N\cong G$ via $gN\mapsto g^k$. This makes $A_k$ a $G/N$-algebra that is $G/N$-equivariantly isomorphic to $A^N$, see Proposition \ref{prop:shadow_space}. Then, we have that $KK_*^{G/N}(A_k,\mathbb C)\cong KK_*^{G}(A_k,\mathbb C).$ Also, the isomorphism $\Phi_{G,N}:KK_1^G(A,\mathbb C)\to KK_1^{G}(A_k,\mathbb C)$ from Lemma \ref{lem:partialGJ} maps $[\tau]$ to $[\tau_k]$. 

Indeed, the extension class $[\tau]\in KK_1^G(A,\mathbb C)$ with $\tau:A\to \mathbb B(Q_X)/\mathbb K(Q_X)$ being the $G$-equivariant Busby invariant, has a $G$-equivariant Fredholm module representative $[H,F]$ (for brevity, we suppress the representation of $A$ on $H$ and the $G$-action), since any completely positive contractive lift of $\tau$ can be turned into a $G$-equivariant one through averaging, which then admits a $G$-equivariant Stinespring dilation through a $G$-equivariant isometry $J:Q_X\to H$, see \cite[Lemma 3.1]{RSW89}. Moreover, the projection (see Lemma \ref{lem:partialGJ}) $$P_N=k^{-1}\sum_{n\in N}V_n\in \mathbb B(H)$$ has $P_N(J(Q_X))=J(Q_X^{(k)})$, hence $\Phi_{G,N}$ maps $[\tau]$ to the class obtained by compressing $\tau$ by $P_N$. Then, conjugating the latter extension by the equivariant unitary $U$ in \eqref{eq:VpowerAD_5} gives exactly $\tau_k$.

Finally, the following diagram 
$$
\begin{tikzcd}
KK_j^G(\mathbb C,A)
\arrow[r,"-\otimes_A\Delta"]
\arrow[d,"\Psi_{G,N}"']
&
KK_{j+1}^G(A,\mathbb C)
\arrow[d,"\Phi_{G,N}"]
\\
KK_j^{G}(\mathbb C,A_k)
\arrow[r,"-\otimes_{A_k}\Delta_k"']
&
KK_{j+1}^{G}(A_k,\mathbb C),
\end{tikzcd}
$$
where $KK_*^{G/N}(\mathbb C,A^N)$ and $KK_*^{G/N}(A^N,\mathbb C)$ are identified with $KK_*^G(\mathbb C, A_k)$ and $KK_*^G(A_k,\mathbb C)$, respectively, is commutative. This follows from basic properties of the maps $\Phi_{G,N}$ and $\Psi_{G,N}$. Specifically, consider a $G$-equivariant finitely generated projective module $M$ over $A$. Note that $([M]\otimes 1_A)\otimes_{A\otimes A} [m]\in KK_0^G(A,A)$ equals the class of the Kasparov $A$-$A$-module $({}_AM_A,0)$, where ${}_AM_A$ is $M$ with left $A$-action being the right $A$-action due to $A$ being commutative. 

Then, working as in the proof of Lemma \ref{lem:partialGJ}, and using the fact that there is a unitary $W: ({}_AM_A\otimes_A H)^N\to ({}_AM_A)^N\otimes_{A^N} H^N$ (see \cite[Proposition 6.1]{BE15}), we compute
\begin{align*}
    \Phi_{G,N}([M]\otimes_A \Delta)&=\Phi_{G,N}\left ([M]\otimes 1_A)\otimes_{A\otimes A} [m]\otimes_A [\tau]\right)\\
    &=\Phi_{G,N}([{}_AM_A]\otimes_A [H,F])\\
    &= \Phi_{G,N}([{}_AM_A\otimes_A H,T]),\qquad \text{where } T \text{
     is an } F \text{-connection for }{}_AM_A\\
     &=[({}_AM_A\otimes_A H)^N,P_NTP_N]\\
     &= [({}_AM_A)^N\otimes_{A^N} H^N, T^N],\qquad \text{where } T^N:=WP_NTP_NW^*\\
     &= [({}_AM_A)^N]\otimes_{A^N}[H^N,F^N],\qquad \text{as } T^N \text{ is an } F^N \text{-connection for }({}_AM_A)^N\\
    &=\Psi_{G,N}([M])\otimes_{A^N} \Delta_k.
\end{align*}
This completes the proof.
\end{proof}

\begin{remark}\label{rem: Proj_duality}
All the $[\tau_k]$ yield the same canonical $K$-homology class of the projective variety $X$. Indeed, assume the map $F_{\Delta}:=-\otimes_{ A} \Delta$ in \eqref{eq:fund_class} is an isomorphism. Then, for each $k\geq 1$, the map $F_{\Delta_k}:=-\otimes_{ A_k} \Delta_k$ is an isomorphism as well. This gives a natural map $F_{X,k}:KK_1^{\mathbb T}(A_k,\mathbb C)\to KK_0(C(X),\mathbb C)$, taking $[\tau_k]$ to the $K^{\mathrm{top}}$-orientation class $\{X\}$ of Baum--Fulton--MacPherson \cite{BFM79}. Specifically, $F_{X,k}$ is defined as 
\small
$$KK_1^{\mathbb T}(A_k,\mathbb C)\xrightarrow{F_{\Delta_k}^{-1}} KK_0^{\mathbb T}(\mathbb C,A_k)\xrightarrow{\mathrm{GJ}_{\mathbb T}} KK_0(\mathbb C,A_k\rtimes \mathbb T)\xrightarrow{-\otimes [E_{\mathbb T}]} KK_0(\mathbb C,C(X))\xrightarrow{\cap \{X\}} KK_0(C(X),\mathbb C),$$ 
\normalsize
where $\mathrm{GJ}_{\mathbb T}$ is the Green--Julg isomorphism, $E_{\mathbb T}:=(A_k\rtimes \mathbb T)p_{\mathbb T}$ is the $(A_k\rtimes \mathbb T)$-$C(X)$ imprimitivity bimodule with $p_{\mathbb T}\in C(\mathbb T, A_k)\subset A_k\rtimes \mathbb T$ being the projection $p_{\mathbb T}(t)=1$. The image of $[\tau]$ under the composition of the first two maps is $[p_{\mathbb T}]$, which is mapped to $[1_{C(X)}]$ and consequently to $\{X\}$. Following \cite{DTY16}, the map $F_{X,k}$ is an isomorphism if $L(Q_X)$ is smooth, pseudoconvex. More details involving the Green--Julg isomorphism and imprimitivity bimodules can be found in the proofs of Lemma \ref{lem:partialGJ} and of Theorem \ref{thm:Vpower_duality}.
\end{remark}

\section{Open problems}
We conclude our work by listing a few problems related to the Veronese bootstrapping operation.

\begin{enumerate}
    \item An interesting fact is that blowup projections in algebraic geometry, as rational maps, are compositions of Veronese embeddings and linear projections \cite{JH92}. Do linear projections (thus blowup projections) preserve the geometric Arveson--Douglas conjecture as well? Potentially, this could reduce the Arveson--Douglas conjecture to the smooth away from zero case, which has already been confirmed in \cite{EE15}.
    \item Under the hypotheses for which $\infty$-essential normality is equivalent to hyperrigidity (see the work of Kennedy--Shalit \cite{KS15}), Theorem A and \cite[Theorem 4.12]{KS15} imply that, if the Veronese tuple \eqref{eq:Toep_gen}, $$S_{\alpha}:=P_{Q_{\nu_k(X)}}M_{w_{\alpha}}|_{Q_{\nu_k(X)}},\qquad \alpha \in \Lambda_k,$$
is hyperrigid in the Toeplitz algebra $\mathcal{T}(Q_{\nu_k(X)})=C^*(S_\alpha:\alpha\in\Lambda_k)$ for $k=1$, then it is hyperrigid for every $k\geq 1$. The Veronese operation is defined more generally for subproduct systems \cite{YG25}. If the canonical generating set is hyperrigid in its Toeplitz algebra, under what conditions is the same true for each of its Veronese powers?
   \item Regarding the counterexample in Remark \ref{rem:counter_ex}, the idea of Douglas that the Toeplitz extension yields a fundamental class might still be true, if we relax the need for self-duality. Namely, as $L(Q_X)$ has the homotopy type of a finite CW-complex, there is a Spanier--Whitehead dual space $Y$. The problem is to find a canonical such $Y$, ideally also admitting a $\mathbb T$-action. Further, it could be that asking for $Y$ to be a classical space is too restrictive, and instead we would have to search for a noncommutative one. In \cite{DL09}, the authors prove duality between stratified spaces and their noncommutative tangent spaces. Note that the counterexample in Remark \ref{rem:counter_ex} is stratified. In any case, assuming such a canonical equivariant dual $Y$ exists, it then seems natural to hope for $KK^{\mathbb T}$-duality between $L(Q_X)$ and $Y$. While this relaxes the requirement of self-duality, it leads to stronger homological consequences than the fundamental class conjecture alone. For the notion of $KK$-duality, see e.g. \cite{AGN24}.

   \item Given two projective varieties $X \subset \mathbb{P}^{m-1}$ and $Y \subset \mathbb{P}^{n-1}$, their Segre product $X \times Y$ embeds into 
$\mathbb{P}^{mn-1}$ via the classical Segre embedding. At the level of graded algebras, this corresponds to the Segre product 
operation on the associated coordinate rings \cite{PoPo, AGG22, GI23}. Does the geometric Arveson--Douglas conjecture \emph{behave well} under Segre 
products? More precisely, if $X$ and $Y$ satisfy the GAD conjecture, does 
$X \times Y$ satisfy it as well? The underlying dimensions suggests 
that the critical Schatten threshold for $X \times Y$ should be 
$p > \dim_{\mathbb{C}} \widehat{X} + \dim_{\mathbb{C}} \widehat{Y} - 1$, 
indicating that the Schatten degrees add up to a shift. A natural way to address this problem is considering the Segre product operation on
subproduct systems defined by the second author in \cite{YG25}.
\end{enumerate}

\end{document}